\newtheorem{theorem}{Theorem}[section]
\newtheorem{corollary}[theorem]{Corollary}
\newtheorem{proposition}[theorem]{Proposition}
\theoremstyle{definition}
\theoremstyle{remark}
\newtheorem{remark}[theorem]{Remark}
\newcommand{\bfzero}{\boldsymbol 0}
\newcommand{\bfbeta}{\boldsymbol \beta}
\newcommand{\bfchi}{\boldsymbol \chi}
\newcommand{\bfomega}{\boldsymbol \omega}
\newcommand{\bff}{\boldsymbol f}
\newcommand{\bfn}{\boldsymbol n}
\newcommand{\bfL}{\boldsymbol L}
\newcommand{\bfx}{\boldsymbol x}
\newcommand{\bfe}{\boldsymbol e}
\newcommand{\bft}{\boldsymbol t}
\newcommand{\bfu}{\boldsymbol u}
\newcommand{\bfv}{\boldsymbol v}
\newcommand{\bfw}{\boldsymbol w}
\newcommand{\bfV}{\boldsymbol V}
\newcommand{\bfW}{\boldsymbol W}
\newcommand{\bfpsi}{\boldsymbol \psi}
\numberwithin{equation}{section}
\newcommand{\jump}[1]{[\![#1]\!]}
\newcommand\calL{\mathcal{L}}
\newcommand\calT{\mathcal{T}}
\newcommand\calF{\mathcal{F}}
\newcommand{\bcalV}{\boldsymbol{\mathcal V}}
\newcommand\bZ{\boldsymbol{Z}}
\newcommand\bz{\boldsymbol{z}}
\newcommand\bfalpha{\boldsymbol{\alpha}}
\newcommand{\dive}{{\ensuremath\mathop{\mathrm{div}\,}}}
\newcommand{\curl}{{\ensuremath\mathop{\mathrm{curl}\,}}}
\newcommand{\pol}{\mathbb{P}}
\newcommand{\bl}{\bigl\langle}
\newcommand{\br}{\bigr\rangle}
\newcommand{\vertiii}[1]{{\left\vert\kern-0.25ex\left\vert\kern-0.25ex\left\vert #1 
    \right\vert\kern-0.25ex\right\vert\kern-0.25ex\right\vert}}
\title[Stabilizing the vorticity for the Oseen equation]{
A pressure-robust discretization of Oseen's equation using stabilization in the vorticity equation}
\author[N.~Ahmed]{Naveed Ahmed}
\address{Department of Mathematics and Natural Sciences, Gulf University for Science and Technology, Mubarak Al-Abdullah Area/West Mishref, Kuwait}
                    \email{ahmed.n@gust.edu.kw}
\author[G.R.~Barrenechea]{Gabriel R. Barrenechea}
\address{Department of Mathematics and
    Statistics, University of Strathclyde, 26 Richmond Street,
    Glasgow, G1 1XH United Kingdom}
\email{gabriel.barrenechea@strath.ac.uk}
\author[E.~Burman]{Erik Burman}
\address{Department of Mathematics, University College London, London, UK-WC1E  6BT, United Kingdom}
\email{e.burman@ucl.ac.uk}
\author[J.~Guzm\'an]{Johnny Guzm\'an}
\address{Division of Applied Mathematics
Brown University
Box F
182 George Street
Providence, RI 02912}
\email{johnny\_guzman@brown.edu}
\author[A.~Linke]{Alexander Linke}
\address{ Weierstrass Institute for Applied Analysis and Stochastics (WIAS), Mohrenstr. 39, 10117 Berlin, Germany}
\email{linke@wias-berlin.de}
\author[C.~Merdon]{Christian Merdon} 
\address{Weierstrass Institute for Applied Analysis and Stochastics (WIAS), Mohrenstr. 39, 10117 Berlin, Germany} 
\email{merdon@wias-berlin.de}
\begin{document}

\maketitle

\begin{abstract}
Discretization of Navier-Stokes' equations using
pressure-robust finite element methods
is considered for the high Reynolds number regime. To counter
oscillations due to dominating convection
we add a stabilization based on  a  bulk term in the form of a
residual-based least squares stabilization of the vorticity equation supplemented
by a  penalty term  on (certain components of) the gradient jump over 
the elements faces.
Since  the stabilization is based on the vorticity equation, it is independent
of the pressure gradients, which makes it pressure-robust. Thus,  we 
 prove pressure-independent error estimates in the linearized case, known as Oseen's problem.
In fact, we prove an $O(h^{k+\frac12})$ error estimate in the $L^2$-norm that is
known to be the best that can be expected for this type of problem.
Numerical examples are provided that, in addition to confirming the theoretical results, 
show that the present method compares favorably to the  classical residual-based
SUPG stabilization.
\end{abstract}

\smallskip
\noindent \textbf{Keywords.}  incompressible Navier--Stokes equations;
divergence-free mixed finite element methods; pressure-robustness; 
convection stabilization; Galerkin least squares; vorticity
equation.

\noindent AMS class: 65N30;  65N12;  76D07

\section{Introduction}
In recent years, it has been observed that the saddle point
structure of the incompressible Navier--Stokes equations
\begin{align} \label{eq:nse}
 \bfu_t - \mu \Delta \bfu + \left (\bfu \cdot \nabla \right ) \bfu
  + \nabla p  & = \bff\,,  \\
\dive \bfu & = 0\,, \nonumber
\end{align}
induces, besides the fulfillment of the well-known
discrete Ladyzhenskaya–-Babu\v{s}ka–-Brezzi (LBB) condition \cite{BoffiBrezziFortin13,gr:book:1986},
 a second fundamental challenge \cite{JLMNR2017}. 
This second challenge is briefly described as follows:
since the pressure acts as a Lagrangian multiplier for the divergence constraint,
the pressure gradient $\nabla p$ will always balance any occurring, unbalanced
gradient field in the momentum balance.
Thus, gradient fields in the momentum balance do only change
$\nabla p$, but not the velocity $\bfu$, which leads
to the existence of certain equivalence classes of
forces  --- and a corresponding seminorm ---
that determine the solution structure of the problem \cite{gls:2019}.
The purpose of this work is to investigate
the relation of this second challenge to the
question of how to stabilize dominant advection in high Reynolds number flows.

Space discretizations that remain accurate in the presence of dominant gradient fields
in the momentum balance --- leading to strong pressure gradients ---
have recently triggered a notable research activity \cite{gls:sinum:2020,rw:cmame:2020,lms:nm:2019,kr:siam:2019,fkls:ijnmf:2019,vz:sinum:2019,CK18,GN18,GS19,alm:siam:2018,lls:siam:2018,SLLL18,sl:jnm:2017,lmt:esaim:2016,EH13,eh:jcp:2013} and have been called {\em pressure-robust} \cite{LinkeMerdon16,alm:cmam:2018}.
This concept explains how these equivalence classes of
forces and the special role of gradient-type forces
affect the {\em notion of dominant advection} in Navier--Stokes flows.
Starting from the idea of pressure-robustness, in this work we  propose a novel discrete stabilization operator for
Navier--Stokes flows that uses only the vorticity equation, and not
 the entire momentum equation. As a model we consider the linear steady-state Oseen equation discretized by means of
an inf-sup stable pair of spaces using $H^1$-conforming velocities of polynomials of order $k$ and pressures
of order $k-1$. To this discrete system,  we add a GLS-type term to the formulation involving the vorticity equation.  One of the main results
emerging from the analysis of the method is that we are able to prove the following error estimate in the convection dominated regime:
\begin{equation}\label{main-estimate}
\| \bfu - \bfu_h\|_{L^2} \leq C h^{k+\frac12} | \bfu|_{H^{k+1}}.
\end{equation}
To the best of our knowledge, this closes a
disturbing gap in the theory of mixed finite elements
that had not been overcome yet. In fact,  
when $H^1$-conforming velocities are used,
the same {\em convergence order}
for the velocity error in the $\bfL^2$ norm had
been achieved for equal order interpolation methods only (see, e.g., \cite{FF92,BB06,BFH06,BBJL07,Cod08}) ---
though at the price of an additional dependency of the velocity error on the continuous
pressure $p$, i.e., giving up pressure-robustness.
Interestingly, whenever the degree of spaces is different (such as in the present case) only in the very recent
paper \cite{BBG20} such an estimate has been proven for an incompressible flow problem, at the cost
of giving up $H^1$-conformity. In fact, the spaces used in \cite{BBG20} were only $H(div)$-conforming.

The main reason why an estimate such as \eqref{main-estimate} has not been obtained using inf-sup stable
elements supplied with classical stabilization mechanisms is linked to the pressure gradient. In fact, 
when SUPG stabilization is used, the
pressure must be included in the stabilizing terms for consistency, and the approximation of the pressure, being of 
a lower order than the one for velocity, prevents from proving \eqref{main-estimate}.
Symmetric stabilization methods, such as Continuous Interior Penalty, Local Projection Stabilization,
and Orthogonal Subscales Method  have been successfully used for scalar convection-diffusion equations.
When one of these methods is applied to the Oseen equation with inf-sup stable elements,  the stabilization is independent of the pressure. So, in principle, the
application of the same analysis from the convection-diffusion equation to the Oseen equation seems achievable. Nevertheless, a more detailed inspection
shows that their stability and convergence relies on 
 orthogonality properties of some interpolant and
stabilization of the orthogonal complement of the convective
term. Consistency is obtained since this orthogonal complement tends to
zero at an optimal rate under refinement.
Nevertheless,  even when pressure-robust spaces are used,
similar orthogonality can  not be exploited for the
vector-valued Oseen's problem since the convection term
itself is not divergence-free in general \cite{BL08}.
So, the extension of the existing analysis for a scalar convection-diffusion equation can not
be carried out unless the pressure gradients are eliminated. Based on this observation, in this work we
add a stabilizing term that penalizes the equation for the vorticity, where pressure
gradients are naturally absent, and no extra properties  of the convective term are required.

The structure of the manuscript is as follows: the introduction is completed by two short sections, one regarding
the motivation and background for the new method introduced in this work, and one containing preliminary results
about vector potentials for divergence-free functions and their regularity.
Then,  in Section \ref{sec:fem:spaces} we introduce the finite element method used in this work, along with
various examples of finite element spaces that are appropriate for its use.
In Section \ref{sec:numerical:analysis}, we deliver a detailed numerical
analysis. We achieve optimal convergence orders for the discrete velocity including,
as stated earlier, the first $O(h^{k+\frac{1}{2}})$ error estimate for the velocity in the
$L^2$-norm for the convection-dominated regime. A supercloseness result for the discrete pressure,
typical for pressure-robust discretizations, is also derived.
In Section \ref{sec:numerics}, we will provide and discuss the results of testing the present method
for different benchmark problems. The benchmarks cover both extreme cases, where the
convection term is a gradient field or a divergence-free vector field.
Also, the general case is treated, where the convection term is
a sum of a gradient field and a divergence-free field.
The new LSVS stabilization is compared to a Galerkin discretization and a
SUPG stabilization applied to the same pairs of finite element methods.
The numerical results show the improvement provided by the new stabilized
method over both the Galerkin and SUPG methods.
Finally, some conclusions are drawn in Section~\ref{concl}.

\subsection{Background, motivation, and notations}
We start by setting the notation to be used throughout. We will use standard notation for Lebesgue and Sobolev  spaces, in line with, e.g., \cite{gr:book:1986}.
In particular, for a domain $D\subseteq\mathbb{R}^d, d=2,3$, and $q\in [1,+\infty]$, $L^q(D)$ will denote the space of  measurable functions such that its
$q^{\rm th}$ power is integrable in $D$ (for $q<+\infty$) and essentially bounded in $D$ (when $q=+\infty$). The space $L^q_0(D)$ denotes the space of functions
in $L^q(D)$ with zero mean value in $D$.  Its norm will be denoted by $\|\cdot\|_{q,D}^{}$
(except when $q=2$, in which case we denote the norm by $\|\cdot\|_{0,D}^{}$). In addition, the inner product in $L^2(D)$ will be denoted by $(\cdot,\cdot)_D^{}$.
For $k\ge 0$ the space $W^{k,q}(D)$ denotes all generalized functions that belong to $L^q(D)$ with distributional derivatives up to the $k^{\rm th}$ order
belonging to $L^q(D)$. We will denote its norm (seminorm) by $\|\cdot\|_{k,q,D}^{}$ ($|\cdot|_{k,q,D}^{}$). When $q=2$, $W^{k,2}(D)=H^k(D)$, and
its norm (seminorm) is denoted by $\|\cdot\|_{k,D}^{}\, (|\cdot|_{k,D}^{})$. The space $W^{k,q}_0(D)$ ($H^k_0(D)$) denotes the closure of $C_0^\infty(D)$ in
$W^{k,q}(D)\, (H^k(D))$. The space $H^{-1}(D)$ denotes the dual of $H^1_0(D)$ with respect to the  inner product in $L^2(D)$,  the corresponding duality pairing
will be denoted by $\bl\cdot,\cdot\br_D^{}$, and the associated norm is denoted by $\|\cdot\|_{-1,D}^{}$. The vector-valued counterpart of a space $X$ will be 
denoted simply by $X^d$, and the same notation will be used for inner products, norms, and duality pairing.

In order to motivate our new stabilization approach,
we now reflect  on the notion
of {\em dominant advection} for the incompressible Navier--Stokes equations.
We set the problem in a bounded, polyhedral, contractible domain $\Omega\subseteq \mathbb{R}^d, d=2,3,$ with Lipschitz continuos
boundary $\partial\Omega$. In addition, we define the space of divergence-free functions in $\Omega$ as follows
\begin{equation}\label{div-free-space}
\bcalV(\Omega) := \{\bfv \in H_0^1(\Omega)^d \mbox{ such
  that } \dive \bfv = 0 \mbox{ in } \Omega\}\,,
\end{equation}
and  regard the following weak formulation
under homogeneous Dirichlet boundary conditions
with time-independent test functions:
search for $\bfu(t) \in \bcalV(\Omega)$ such that for all $\bfv \in \bcalV(\Omega)$ the following holds
\begin{equation} \label{eq:weak:nse}
  \frac{\mathrm{d} }{\mathrm{dt}} (\bfu(t), \bfv)_\Omega^{} + \mu (\nabla \bfu(t), \nabla \bfv)_\Omega^{}
   + ((\bfu(t) \cdot \nabla) \bfu(t), \bfv)_\Omega^{} = \bl\bff(t), \bfv \br_\Omega^{}\,,
\end{equation}
in the sense of distributions in $\mathcal{D}'(]0,T[)$, with
$\bfu(0) = \bfu_0$ in $\bcalV(\Omega)'$ fulfilled in the weak sense, see \cite{bf:book:2013}.
We remark that the weak formulation is pressure-free, avoiding issues with
a possible low regularity of the pressure field in the transient nonlinear setting.

Let now  $\bff_1,\bff_2  \in H^{-1}(\Omega)^d$  be two forcings
differing only by a gradient field, i.e., $\bff_1 - \bff_2 = \nabla \phi$
with $\phi \in L^2(\Omega)$. We interpret these forcings as functionals in $\bcalV(\Omega)'$ and
compute for arbitrary $\bfv \in \bcalV(\Omega)$
$$
  \bl \bff_1, \bfv \br_\Omega^{} - \bl \bff_2, \bfv \br_\Omega^{} \,  = \,
  \bl \bff_1 - \bff_2, \bfv \br_\Omega^{}  \, = \, -( \phi, \nabla \cdot \bfv)_\Omega^{} = 0.
$$
Thus, $\bff_1$ and $\bff_2$ are identical if they
are regarded as functionals in $\bcalV(\Omega)'$.
This leads to the fundamental observation
that $\bff_1$ and $\bff_2$ are {\em velocity-equivalent}
in the sense that they  induce the very same velocity solution in \eqref{eq:weak:nse}.
A difference between $\bff_1$ and $\bff_2$ can only be recognized
in the original equations
\eqref{eq:nse}, where the different forcings would lead to pressure gradients
differing exactly by $\nabla \phi$.
Thus, the {\em  notion of velocity equivalence} of two functionals in $H^{-1}(\Omega)^d$ can be formally defined by
\begin{equation}
    \bff_1 \simeq \bff_2 \quad :\Leftrightarrow \quad \exists q \in L^2_0(\Omega) :
     \forall\, \bfw \in H^1_0(\Omega)^d\;  \bl \bff_1 - \bff_2, \bfw \br_\Omega^{}  = -(q, \nabla \cdot \bfw)_\Omega^{}.
\end{equation}
The corresponding seminorm, which induces these equivalence classes of
functionals is naturally given for $\bff \in H^{-1}(\Omega)^d$ by
\begin{equation} \label{eq:semi:norm}
  \| \bff \|_{\bcalV(\Omega)'} := \sup_{\bfzero \not= \bfv \in \bcalV(\Omega)} \frac{| \bl \bff, \bfv \br_\Omega^{} |}
   {\| \nabla \bfv \|_{0,\Omega}^{} }.
\end{equation}
Clearly, the above supremum is a seminorm since
$\| \nabla \phi \|_{\bcalV(\Omega)'} = 0$ for all $\phi\in L^2(\Omega)$.

Turning back to the issue of constructing discrete stabilization operators
for dominant advection in Navier--Stokes flows,
we remark that also the strength of the advection
term has to be measured in the seminorm \eqref{eq:semi:norm}, and not
in the standard $H^{-1}(\Omega)^d$-norm.
Seeing things from this angle, we see that a non-zero convective term lies in between the following 
two extreme cases:
\begin{enumerate}
\item a gradient field: no dominant advection in the sense above due to
$\| (\bfu \cdot \nabla) \bfu \|_{\bcalV(\Omega)'}=0$; 
\item a divergence-free field: leading to dominant advection.
\end{enumerate}

 In the first extreme case (i.e. (1) above), where
$\| (\bfu \cdot \nabla) \bfu \|_{-1,\Omega}^{}$ is large,
although it holds $\| (\bfu \cdot \nabla) \bfu \|_{\bcalV(\Omega)'}=0$,
pressure-robust mixed methods have been shown recently to
outperform classical mixed methods that are only LBB-stable
\cite{gls:2019,fkls:ijnmf:2019}. 
They are designed in such a way that any gradient forcing
in $(\bfu_h \cdot \nabla) \bfu_h$ does not change the discrete velocity solution $\bfu_h$,
respecting on the discrete level the equivalence classes
that are induced by the seminorm \eqref{eq:semi:norm}.
From a more applied point of view, pressure-robust methods have been shown
to be important for vortex-dominated flows \cite{gls:2019,sl:sicom:2018}, where 
the following relation between the convective term and the pressure gradient holds
\begin{equation} \label{eq:quadratic:linear:balance}
    (\bfu \cdot \nabla) \bfu + \nabla p \approx \bfzero,
\end{equation}
meaning that the centrifugal force within the vortex structure
is balanced by the pressure gradient.
Such flows are known as generalized Beltrami flows and are intensively studied
in Topological Fluid Dynamics, cf. \cite{ak:book:1998}, and they
are popular benchmark problems.
For this type of flows, due to \eqref{eq:quadratic:linear:balance} the
quadratic velocity-dependent
convection  term balances the linear pressure gradient, and then the pressure field
is usually more complicated than the velocity field. As a consequence,
it has been  demonstrated numerically  for this class of time-dependent
high Reynolds number flows
that pressure-robust DG methods of order $k$ delivered
on coarse grids similarly accurate results as DG methods of order $2 k$
that are only LBB stable \cite{gls:2019}.

With respect to (2) above, in order to derive an appropriate convection stabilization for the
divergence-free part of $(\bfu \cdot \nabla) \bfu$, which is actually measured
by the seminorm $\| (\bfu \cdot \nabla) \bfu \|_{\bcalV(\Omega)'}$, we try to obtain
a better intuition for the meaning of the weak formulation \eqref{eq:weak:nse}.
Exploiting that every divergence-free function $\bfv \in \bcalV(\Omega)$  has a vector potential $\bfv = \curl \bfchi$ \cite{gr:book:1986}, 
we can formally derive for smooth enough functionals $\bff$
$$
  (\bff, \bfv)_\Omega^{} = (\bff, \curl \bfchi)_\Omega^{} = (\curl \bff, \bfchi)_\Omega^{}.
$$
When applied to the term $\bfu_t$, a similar
integration by parts with the curl operator
and introducing the vorticity $\bfomega := \curl \bfu$ 
will yield
$$
  \frac{\mathrm{d}}{\mathrm{dt}} (\bfu(t), \bfv)_\Omega^{} = \frac{d}{\mathrm{dt}} (\bfu(t), \curl \bfchi)_\Omega^{} = \frac{d}{\mathrm{dt}} (\bfomega(t), \bfchi)_\Omega^{},
$$
and applying the same idea to the remaining terms in \eqref{eq:weak:nse}
reveals that
the weak formulation \eqref{eq:weak:nse} can be understood as
a mathematically precise formulation of the vorticity equation
\begin{equation}\label{vort:equation}
  \bfomega_t -\mu \Delta \bfomega
   + \left (\bfu \cdot \nabla \right ) \bfomega 
    - \left (\bfomega \cdot \nabla \right ) \bfu = \curl \bff\,,
\end{equation}
cf. \cite{JLMNR2017,cm:book:1993}. In this last equation, the gradient of the pressure has completely disappeared.
So, starting from this remark  in this work
we propose a residual-based stabilization of the vorticity equation,
which we call {\em least squares vorticity stabilization (LSVS)}. This stabilization strategy
includes a higher order stabilization term on the vorticity equation in the bulk,
and a penalty on the jump of the tangential component of the
convective derivative over element faces (see \S~\ref{FEM:method} for details).
A similar starting point was used in the meteorology community
\cite{nc:2017} where a residual SUPG-like method built from \eqref{vort:equation} for the two-dimensional
case (although different from the one proposed in this work, and no analysis was presented in that work). 
The same principle has  also been applied
in recent work on pressure-robust residual-based a posteriori error 
control \cite{lms:nm:2019,kanschat2014}.

To keep the technical details down
we restrict the analysis to a linearized problem, namely
the following Oseen's problem on a bounded, connected, contractible, polyhedral Lipschitz domain $\Omega$:
\begin{align}\label{eq:oseen}
\calL \bfu+ \nabla p & = \bff \qquad \textrm{in}\;\Omega\,,\nonumber\\
\dive \bfu & = 0\qquad \textrm{in}\;\Omega\,,\\
\bfu & = \bfzero\qquad\textrm{on}\; \partial \Omega, \nonumber
\end{align}
where
\begin{equation}\label{L-definition}
\calL \bfu:= \sigma \bfu + (\bfbeta \cdot \nabla ) \bfu  - \mu \Delta \bfu\,.
 \end{equation}
Here, $\mu>0$ denotes the diffusion coefficient, $\sigma > 0$, and the convective term $\bfbeta$ is assumed to
belong to $W^{1,\infty}(\Omega)^d$ and to satisfy $\dive \bfbeta = 0$.
This is an elliptic system that is well posed in
$H^1_0(\Omega)^d \cap \bcalV(\Omega) \times L^2_0(\Omega)$ by Lax--Milgram's lemma
and Brezzi's theorem for all $\mu>0$.
A weak formulation of Oseen's problem, which is in the spirit of the
weak formulation \eqref{eq:weak:nse} for the time-dependent incompressible
Navier--Stokes equations, is  given by:
find $\bfu \in \bcalV(\Omega)$ such that for all $\bfv \in \bcalV(\Omega)$ the following holds
\begin{equation} \label{eq:weak:Oseen}
  \mu (\nabla \bfu, \nabla \bfv)_\Omega^{} + ((\bfbeta \cdot \nabla ) \bfu, \bfv)_\Omega^{}
    +  \sigma (\bfu, \bfv)_\Omega^{}  = \bl \bff, \bfv \br_\Omega^{}.
\end{equation}
In the following, we will refer to this weak formulation as Oseen's problem. We will nevertheless always keep in mind that,
given the unique solution of \eqref{eq:weak:Oseen}, there exists a unique pressure $p\in L^2_0(\Omega)$
such that $(\bfu,p)$ satisfies the mixed weak formulation of \eqref{eq:oseen}.

\subsection{Preliminary results}

As it was already mentioned, for every divergence-free function in $\Omega$ we can associate a vector potential. So,
natural spaces to consider that can capture the kernel of the divergence operator are given by
\begin{equation*}
\bZ:=
\left\{ \begin{array}{lr}
\{ \bz \in H^1(\Omega)^3: \curl \bz \in H_0^1(\Omega)^3\},  & \text{ if } d=3\,, \\
\{ z \in H^1(\Omega): \curl z \in H_0^1(\Omega)^2\}, & \text{ if } d=2. 
\end{array}
\right.
\end{equation*}
We stress the fact that for $d=2$, $z$ is a scalar function, while for $d=3$, $\bz$ is a vector-valued function. To simplify the presentation from
now on we will just use the boldface notation for both cases, and the definition will depend on the context.

Using the generalized Bogovskii operator since $\Omega$ is contractible and Lipschitz there exists $\bz$ with components in $H_0^2(\Omega)$ \cite{Costa10} such that 
\begin{equation}\label{aux1}
\curl \bz=\bfu \quad \text{ in } \Omega.
\end{equation}
It is important to notice here that $\bz$ and its first derivative vanish on  $\partial \Omega$. If we assume more regularity of $\bfu$ then we can find a smoother $\bz$ satisfying \eqref{aux1}; 
however, it may not satisfy boundary conditions. More precisely, the following result is a rewriting of \cite[Theorem~4.9~b)]{Costa10}, where we have used that, since the domain
$\Omega$ is supposed to be contractible, then the cohomology space is zero.

\begin{proposition}\label{prop1}
Let $\Omega \subset \mathbb{R}^d$ be a contractible, Lipschitz polygonal/polyhedral domain.  Let $\bfu \in H^r(\Omega)^3$ with $r \ge 1$ such that $\dive\bfu=0$. Then,  there exists 
$\bz\in H^{r+1}(\Omega)^d$ satisfying \eqref{aux1} and the following stability estimate 
\begin{equation}\label{aux2}
\|\bz\|_{r+1,\Omega}  \le C \|\bfu\|_{r,\Omega}\,,
\end{equation}
where the constant $C>0$  is independent of $\bfu$.
\end{proposition}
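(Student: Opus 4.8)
The plan is to deduce the statement from the regularized Poincaré (homotopy) operators on the de Rham complex constructed in \cite{Costa10}, after translating the divergence and curl relations into the language of differential forms. First I would identify the vector field $\bfu$ with a differential form on $\Omega$: in three dimensions with the $2$-form $\omega_{\bfu}=u_1\,dy\wedge dz+u_2\,dz\wedge dx+u_3\,dx\wedge dy$, and in two dimensions with the $1$-form $\star\bfu^{\flat}=-u_2\,dx+u_1\,dy$. Under this identification the hypothesis $\dive\bfu=0$ says precisely that the associated form is closed, namely $d\omega_{\bfu}=(\dive\bfu)\,dx\wedge dy\wedge dz=0$ (respectively $d(\star\bfu^{\flat})=(\dive\bfu)\,dx\wedge dy=0$), and the sought identity \eqref{aux1} is the assertion that this closed form is the exterior derivative of a potential form whose vector proxy is $\bz$.

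Next I would invoke the regularized Poincaré operator $R$ of \cite[Theorem~4.9]{Costa10}: on the Lipschitz domain $\Omega$ (which, being contractible, is covered by the finitely many star-shaped pieces that the construction requires) the operator $R$ maps $\ell$-forms with $H^r$ coefficients continuously into $(\ell-1)$-forms with $H^{r+1}$ coefficients, and satisfies the homotopy identity $dR+Rd=I-K$, where $K$ is a smoothing operator whose range is a finite-dimensional space of polynomial forms representing the de Rham cohomology. Taking the potential form to be $R$ applied to $\omega_{\bfu}$ (respectively to $\star\bfu^{\flat}$) and using that this form is closed, the homotopy identity collapses to $d(R\omega_{\bfu})=\omega_{\bfu}-K\omega_{\bfu}$. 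Here the contractibility of $\Omega$ enters decisively: the cohomology space in the relevant degree vanishes, so $K$ annihilates the closed form and the potential is genuinely exact, $d(R\omega_{\bfu})=\omega_{\bfu}$. Reading this identity back in vector notation gives exactly $\curl\bz=\bfu$, with $\bz$ the vector proxy of $R\omega_{\bfu}$.

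The stability estimate \eqref{aux2} then follows directly from the continuity of $R$ between the stated Sobolev scales, since $\|\bz\|_{r+1,\Omega}\le\|R\omega_{\bfu}\|_{r+1,\Omega}\le C\,\|\omega_{\bfu}\|_{r,\Omega}\le C\,\|\bfu\|_{r,\Omega}$, with the constant depending only on $\Omega$ through the geometric data of the covering. I expect the only genuinely delicate point to be the bookkeeping of the cohomological remainder: one must verify that, for the contractible domain, $K$ acts trivially on the closed form under consideration (equivalently, that the de Rham cohomology in the degree of $\omega_{\bfu}$ is zero), so that no nonzero polynomial correction survives; should one prefer not to invoke this vanishing directly, the remainder $K\omega_{\bfu}$ is itself a closed polynomial form and hence, by contractibility, exact with a smooth and norm-controlled primitive that can be added to $R\omega_{\bfu}$ without affecting \eqref{aux2}. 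Since this is precisely the content that \cite[Theorem~4.9~b)]{Costa10} packages, the remaining work is the routine identification of forms with vector fields and the transcription of the mapping properties; note that boundary conditions are deliberately \emph{not} imposed here, which is why the regularized Poincaré operator is used in place of the Bogovskii operator employed for the compactly supported potential of \eqref{aux1}.
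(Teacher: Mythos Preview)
Your proposal is correct and follows precisely the route the paper takes: the paper does not give an independent proof of the proposition but states that it is a rewriting of \cite[Theorem~4.9~b)]{Costa10}, using that contractibility of $\Omega$ forces the relevant cohomology space to vanish. You have simply unpacked that citation---the identification of $\bfu$ with a closed form, the application of the regularized Poincar\'e operator, the collapse of the homotopy identity via trivial cohomology, and the mapping property $H^r\to H^{r+1}$---which is exactly the content the paper leaves implicit.
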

Note that the boundary conditions $\bz=0$ on $\partial \Omega$ might not hold even if $\bfu$ vanishes on  $\partial \Omega$  if we would like $r \ge 2$. However, in two dimensions we can guarantee that  boundary conditions are satisfied.  
 
\begin{corollary}\label{cor1}
Under the hypotheses of Proposition~\ref{prop1}, if  $d=2$ we can choose $\bz$ satisfying \eqref{aux1} and \eqref{aux2}, so that $\bz=0$ on $\partial \Omega$. 
\end{corollary}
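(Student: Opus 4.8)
The plan is to exploit that in two dimensions the vector potential furnished by Proposition~\ref{prop1} is a scalar stream function, whose boundary trace can be normalized to zero without altering any of its derivatives. First I would apply Proposition~\ref{prop1} to obtain a (scalar) $\bz\in H^{r+1}(\Omega)$ with $\curl\bz=\bfu$ and $\|\bz\|_{r+1,\Omega}\le C\|\bfu\|_{r,\Omega}$. Since $r\ge 1$, the Sobolev embedding $H^{r+1}(\Omega)\hookrightarrow C^0(\overline{\Omega})$ holds in $\RR^2$, so $\bz$ has a continuous trace on $\partial\Omega$, and $\nabla\bz\in H^{r}(\Omega)^2\hookrightarrow H^1(\Omega)^2$ has a well-defined trace as well.

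The central step is to relate the boundary behaviour of $\bz$ to the normal flux of $\bfu$. Denoting by $\bfn$ the outward unit normal and by $\bft$ the associated unit tangent on $\partial\Omega$, a direct computation starting from $\bfu=\curl\bz$ shows that, up to the orientation of $\bft$, the tangential derivative satisfies $\nabla\bz\cdot\bft=\bfu\cdot\bfn$ on $\partial\Omega$. In the situation at hand $\bfu$ vanishes on $\partial\Omega$, hence $\bfu\cdot\bfn=0$ and the tangential derivative of $\bz$ vanishes along each straight edge of the polygon; consequently $\bz$ is constant on every edge. Because $\bz$ is continuous up to the boundary these edgewise constants must agree at the vertices, and since $\Omega$ is contractible its boundary $\partial\Omega$ is connected, so $\bz\equiv c$ on all of $\partial\Omega$ for a single constant $c$.

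I would then define $\tilde\bz:=\bz-c$. As the curl annihilates constants, $\curl\tilde\bz=\curl\bz=\bfu$, so \eqref{aux1} is preserved, while $\tilde\bz=0$ on $\partial\Omega$ by construction. For the stability estimate \eqref{aux2}, note that $\tilde\bz$ and $\bz$ share all derivatives of order $\ge 1$, so $|\tilde\bz|_{k,\Omega}=|\bz|_{k,\Omega}\le\|\bz\|_{r+1,\Omega}\le C\|\bfu\|_{r,\Omega}$ for $1\le k\le r+1$; in particular $|\tilde\bz|_{1,\Omega}=\|\nabla\bz\|_{0,\Omega}=\|\bfu\|_{0,\Omega}$. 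The only new term is $\|\tilde\bz\|_{0,\Omega}$, which I would bound by the Poincar\'e--Friedrichs inequality, legitimate since $\tilde\bz\in H^1_0(\Omega)$, giving $\|\tilde\bz\|_{0,\Omega}\le C_P\,|\tilde\bz|_{1,\Omega}=C_P\,\|\bfu\|_{0,\Omega}$. Summing the contributions yields $\|\tilde\bz\|_{r+1,\Omega}\le C\|\bfu\|_{r,\Omega}$, that is \eqref{aux2}, and relabeling $\tilde\bz$ as $\bz$ completes the argument.

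The step I expect to be most delicate is the trace identity $\nabla\bz\cdot\bft=\bfu\cdot\bfn$ on the merely Lipschitz polygonal boundary, together with the passage from ``constant on each edge'' to ``constant on $\partial\Omega$''. The latter hinges on the continuity of $\bz$ up to the boundary (from the Sobolev embedding) and on the connectedness of $\partial\Omega$ (from contractibility), while the former is cleanest when argued edge by edge, where the boundary is smooth and the tangential trace of $\nabla\bz$ is unambiguous. The remaining ingredients, namely the invariance of the curl under additive constants and the Poincar\'e step, are entirely routine.
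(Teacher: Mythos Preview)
Your argument is correct and follows essentially the same route as the paper's proof: obtain a scalar stream function from Proposition~\ref{prop1}, use the vanishing of $\bfu$ on $\partial\Omega$ to deduce that the tangential derivative of $\bz$ is zero there, conclude $\bz$ is a constant $c$ on the boundary, and subtract it. The paper simply asserts that $\tilde\bz=\bz-c$ ``satisfies all the requirements of the result, including estimate \eqref{aux2}'', whereas you spell out the Poincar\'e--Friedrichs step needed to control $\|\tilde\bz\|_{0,\Omega}$ and the Sobolev-embedding/connectedness argument for passing from edgewise constancy to a single constant; these are welcome clarifications rather than a different strategy.
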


\begin{proof}
Let us assume that $d=2$. By  Proposition \ref{prop1} there exists $\bz \in H^{r+1}(\Omega)$ so that \eqref{aux1} and \eqref{aux2} hold. Since $\curl \bz=\bfu$ we have that $\curl \bz=0$ on  $\partial \Omega$. Denoting by $\bft$ the unit tangent vector to $\partial\Omega$, 
this implies that $\nabla \bz \cdot \bft=0$ on  $\partial \Omega$ and then $\bz$ is 
constant on $\partial \Omega$. Let us denote that constant $c\in\mathbb{R}$. Then, the function $\tilde{\bz}=\bz-c$ satisfies all the requirements of the result, including 
 estimate \eqref{aux2}. 
\end{proof}

\section{The stabilized finite element method}
\label{sec:fem:spaces}

\subsection{Finite element spaces}

 Let $\{\calT_h^{} \}_{h>0}^{}$ be a family of shape-regular simplicial triangulations of
$\Omega$. The elements of $\calT_h^{}$ will be denoted by $K$ with diameter $h_K^{}:=\textrm{diam}(K)$ and maximal mesh width $h=\max\{h_K^{}:K\in\calT_h^{}\}$.
For an element $K\in \calT_h^{}$, we define the set $\calF_K^{}$ of its facets. The set of all facets of the triangulation $\calT_h^{}$ is denoted by $\calF$ and $\calF^i$ 
denotes  the interior facets . For $F\in \calF$
we will denote $h_F^{}=\textrm{diam}(F)$, and $|F|$ the $(d-1)$-dimensional measure of $F$ (area for $d=3$ and length for $d=2$). 
The $L^2(F)$-inner product is denoted by $\bl\cdot,\cdot\br_F^{}$.
For a vector valued function $\bfv$ we define the the tangential jumps across $F= K_1  \cap K_2$ with $K_1, K_2 \in \calT_h$  as
\begin{equation*}
\jump{\bfv \times \bfn}|_F := \bfv_1 \times \bfn_1 + \bfv_2 \times \bfn_2,
\end{equation*}
where $\bfv_i=\bfv|_{K_i}$ and $\bfn_i$ is the unit normal pointing out of $K_i$. If $F$ is a boundary face then we define
\begin{equation*}
\jump{\bfv \times \bfn}|_F^{} := \bfv \times \bfn.
\end{equation*}

In addition, we introduce the following broken inner products (assuming the functions involved are regular enough so every quantity is finite):
\begin{equation}\label{broken-pi}
(v,w)_h^{}:=\sum_{K\in\calT_h^{}}(v,w)_K^{}, \quad  \bl v,w \br_{\calF^i}:=\sum_{F\in\calF^i } \bl v,w \br_F\,  \quad   \textrm{and} \quad \bl v,w \br_\calF:=\sum_{F\in\calF} \bl v,w \br_F\,,
\end{equation}
with associated norms $\|\cdot\|_h^{}$, $\|\cdot\|_{h,\calF}^{}$,  $\|\cdot\|_{h,\calF^i}$, respectively.

For $s\ge 1$ we  define the standard  piecewise polynomial Lagrange space by
\begin{equation}\label{pol-space}
\bfW_h^s:=\{ \bfw \in H_0^1(\Omega)^d: \bfw|_K^{} \in \pol_s(K)^d \quad \forall K \in \ \calT_h\}.
\end{equation}

Over $\calT_h^{}$, and for $k\ge 1$, we assume we have  finite element spaces $\bfV_h \subset H_0^1(\Omega)^d,Q_h \subset L^2_0(\Omega) $
and the associated subspace of (exactly) divergence-free functions
\begin{equation}\label{div-free-space-h}
\bcalV_h := \{\bfv_h \in \bfV_h: \mbox{ such
  that } \dive \bfv_h= 0 \mbox{ in } \Omega\}\,,
\end{equation}
satisfying the following assumptions:

\noindent (A1) $\dive \bfV_h \subset Q_h$;

\noindent  (A2) the pair $(\bfV_h , Q_h)$ is inf-sup stable;

\noindent (A3)   $\bfW_h^k \subset \bfV_h \subset \bfW_h^{r}$  for some $r, k \ge 1$;

\noindent  (A4) there exists a finite element space $\bZ_h \subset \bZ$ such that $ \curl \bZ_h=\bcalV_h$;
 
\noindent (A5) any $\bz \in   \bZ $ with components in $H^{k+2}(\Omega)$ satisfies the following estimate:
for every multi-index $\bfalpha=(\alpha_1^{},\ldots,\alpha_d^{})\in\mathbb{R}^d$, where $|\bfalpha|:=\alpha_1^{}+\ldots+\alpha_d^{}$, the following approximation holds
\begin{equation*}
\inf_{\bfpsi_h \in \bZ_h} \|h^{|\bfalpha|} \partial^{\bfalpha} (\bz -\bfpsi_h)\|_{h} \le C h^{k+2} \|\bz\|_{k+2,\Omega}^{} \quad \text{ for } |\bfalpha| \le k+1;
\end{equation*}

\noindent (A6) if $d=2$ we can choose $\bZ_h$ so that $\bZ_h \subset H_0^1(\Omega)$.  

\begin{remark}
We finish this  section by giving an alternative interpretation of (A4)-(A5). In fact, (A4)-(A5) imply that the space $\bcalV(\Omega)$ can be approximated by functions in
$\bcalV_h^{}$ in the following sense: for all $\bfv\in \bcalV(\Omega)$, the following approximation result holds
\begin{equation} \label{eq:disc:divfree}
  \inf_{\bfw_h \in \bcalV_h^{}} \| \nabla \bfv - \nabla \bfw_h \|_{0,\Omega}^{} \leq \left (1+ C_F^{} \right )
    \inf_{\bfw_h \in \bfV_h^{}} \| \nabla \bfv - \nabla \bfw_h \|_{0,\Omega}^{}.
\end{equation}
Here, $C_F^{}$ denotes the stability constant
of a Fortin operator, whose existence is assured by LBB-stability, see \cite{JLMNR2017,BoffiBrezziFortin13,gr:book:1986}.
\end{remark}

\subsubsection{Examples of finite element methods satisfying (A1)-(A6)}
 Assumptions (A1)-(A6) essentially state that the finite element spaces are piecewise polynomials (so inverse inequalities are valid), and that
the space $\bcalV(\Omega)$ can be approximated, with optimal order, by the space $\bcalV_h^{}$. In addition, they state that the space of vector potentials
associated to the space $\bcalV(\Omega)$ can also be approximated, with optimal order, by the space $\bZ_h^{}$ containing the discrete vector
potentials. This last hypothesis will be vital in the error analysis. We now present a few examples of
finite element spaces that satisfy Assumptions (A1)-(A6).
The most classical example  (and the one we use in our numerical experiments)
 is the    Scott--Vogelius element \cite{SV85}, where 
\begin{equation}\label{SV-spaces}
\bfV_h^{}=\bfW_h^k\qquad \textrm{and}\qquad Q_h^{}=\{q_h^{}\in L^2_0(\Omega): q_h^{}|_K^{}\in \mathbb{P}_{k-1}^{}(K)\;
\forall K\in\calT_h^{}\}\,.
\end{equation} 
The Scott--Vogelius element is LBB-stable on different kinds of
shape-regular triangulations for different kinds of polynomial orders.
For example, on shape-regular, barycentrically refined meshes, the condition $k \geq d$
suffices \cite{qin:phd:1994,zhang:mcom:2005,guzman2018inf}. For $d=2$, the condition $k \geq 4$
allows to derive LBB-stability on rather general, shape-regular meshes \cite{GS19,SV85}, with potentially modifying the pressure space to allow singular vertices.
Characterizing the discrete potential space $\bZ_h$ for the above examples has been  addressed in several papers \cite{falk2013stokes, fu2020exact} and they usually form an exact sequence. In particular, the space $\bZ_h$ in the case $d=2$ on  barycentrically refined meshes is the Clough--Tocher $C^1$ space \cite{CloughTocher1965}.  Additional exact sequences, possibly using even smoother spaces, that lead to spaces satisfying our assumptions can be found in \cite{guzman2020exact,christiansen2018generalized, falk2013stokes, neilan2015discrete}.

In addition, it is worth mentioning that the requirement (A3), stating that the functions used to approximate the velocity are piecewise polynomials prevents
us from using  spaces using rational functions, such as the ones proposed in  \cite{guzman2014conforming, guzman2014conforming2}. Nevertheless, the same
analysis carried out below can be applied, with minor modifications, to that case as well.
The same observation can be made about methods that belong to the IGA family proposed in, e.g., \cite{BdFSV11,eh:jcp:2013,EH13}, since they are built using smooth rational functions,
rather than polynomials.

\subsection{The method}\label{FEM:method}
The idea is to remove all the gradient fields from the momentum equations
in the stabilization, including the pressure gradient, by
adding stabilization only
on the vorticity equation instead of the velocity-pressure one,
since any gradient is in the kernel of the curl operator.
This amounts to adding a least squares term of the
vorticity equation
$\curl \mathcal{L}\bfu  = \curl \bff$.
We multiply this equation by $\tau \curl  \mathcal{L}\bfv$,
where $\tau$ is a stabilization parameter chosen so that the stabilizing term scales in the
same way as the equation (see \eqref{tau-def} below). This leads to the term
\[
(\tau \curl \mathcal{L}\bfu,\curl \mathcal{L}\bfv)_h^{} = (\tau \curl \bff, \curl \mathcal{L}\bfv)_h^{}\,,
\]
or 
\[
(\tau \curl \mathcal{L}\bfu,\curl (\bfbeta \cdot \nabla) \bfv)_h^{} = (\tau \curl \bff, \curl (\bfbeta \cdot \nabla) \bfv)_h^{}\,.
\]
For simplicity we only consider the former form for the analysis below.
Observe that this is a high order term, which for smooth flows can be
assumed to be of a smaller magnitude than the boundary penalty term introduced next. In fact, if no
further assumptions are made on the velocity space it is not sufficient
to guarantee optimal bounds. Thus, a further control on the jumps of the convective
gradients over the facets, similar to that proposed in \cite{BH04}, needs to be added to the formulation. 
So, on each internal facet $F$ we add the term
\[
\bl h^2 \jump{(\bfbeta \cdot \nabla)\bfu_h\times \bfn},\jump{(\bfbeta \cdot \nabla) \bfv_h\times \bfn}\br_{F}^{}\,.
\] 

Gathering the terms introduced above, the stabilized finite element method analyzed in this work reads: Find $(\bfu_h^{},p_h^{}) \in \bfV_h^{}\times Q_h^{}$ such that
\begin{equation}\label{eq:FEMstab}
\hspace*{-0.4cm}
\left\{
\begin{array}{rll}
a(\bfu_h^{},\bfv_h^{}) - b(p_h^{},\bfv_h^{})+S(\bfu_h^{},\bfv_h^{}) =&  L(\bfv_h^{}) \quad & \forall \, \bfv_h \in  \bfV_h\,, \\
b(q_h^{}, \bfu_h^{})=& 0  \quad & \forall \, q_h \in Q_h\,,
\end{array}
\right.
\end{equation}
where the bilinear forms are defined by
\begin{align}
a(\bfu_h^{},\bfv_h^{}) :=&  \,(\sigma \bfu_h^{} + (\bfbeta \cdot \nabla) \bfu_h^{},\bfv_h^{})_\Omega^{} + \mu(\nabla \bfu_h^{},\nabla \bfv_h^{})_\Omega^{}\,,\\
b(p_h^{},\bfv_h^{}) :=&  \,(p_h^{}, \nabla \cdot \bfv_h^{})_\Omega^{}\,,
\end{align}
and the stabilizing bilinear form is given by
\begin{equation}\label{eq:stab_pv}
S(\bfu_h^{},\bfv_h^{}):= \delta_0^{}\Big\{(\tau \curl \calL \bfu_h, \curl \calL \bfv_h)_h+ 
\bl h^2 \jump{(\bfbeta \cdot \nabla)\bfu_h^{}\times \bfn },\jump{(\bfbeta \cdot \nabla) \bfv_h^{} \times \bfn}\br_{\calF^i}^{}\Big\}\,.
\end{equation}
Here the broken scalar products are defined in \eqref{broken-pi}, the stabilization parameter $\tau|_K^{}=\tau_K^{}$ is given by
\begin{equation}\label{tau-def}
\tau_K^{}:= \min\left\{ 1, \frac{\|\bfbeta\|_{\infty,\Omega}^{}h_K^{}}{\mu}\right\}\, \frac{h_K^3}{\|\bfbeta\|_{\infty,\Omega}^{}}\,.
\end{equation}
 Finally, the right-hand side $L$ is given by
\begin{equation}\label{RHS-method}
L(\bfv_h^{}) := (\bff,\bfv_h^{})_\Omega^{}+ \delta_0^{}( \tau \curl \bff, \curl \calL \bfv_h^{})_h^{}\,.
\end{equation}
In the stabilizing terms, $\delta_0^{}>0$ is a non-dimensional parameter. The value of $\delta_0^{}$ does not affect the qualitative behavior of the error estimates,
so we will not track this constant in our error estimates below. Nevertheless, in Section~\ref{sec:numerics} we will carry out a comprehensive study of its
optimal value.

For the analysis we introduce the following mesh-dependent norm
\begin{equation}\label{norm}
\vertiii{\bfv}^2 := \|\sigma^{\frac12} \bfv\|^2_{0,\Omega} + \|\mu^{\frac12} \nabla\bfv \|^2_{0,\Omega} + |\bfv|_S^2\,,
\end{equation}
where $|\bfv|_S^2 := S(\bfv,\bfv)$. We see that
\begin{equation}\label{elipt-a+s}
\vertiii{\bfv_h^{}}^2 =  (a+S)(\bfv_h^{},\bfv_h^{})\qquad \forall\,\bfv_h^{}\in \bfV_h^{}\,.
\end{equation}
In addition,  the  pair $\bfV_h\times Q_h$ satisfies the inf-sup condition, by Assumption (A2), which ensures the well-posedness of 
Problem~\eqref{eq:FEMstab}. Moreover, Method~\eqref{eq:FEMstab} is strongly consistent for smooth enough $(\bfu,p)$, this is
\begin{equation}\label{eq:gal_ortho}
\hspace*{-0.4cm}
\left\{
\begin{array}{rll}
a(\bfu-\bfu_h^{},\bfv_h^{}) - b(p-p_h^{},\bfv_h^{})+S(\bfu-\bfu_h^{},\bfv_h^{}) =&  0 \quad & \forall \, \bfv_h \in  \bfV_h\,, \\
b(q_h^{}, \bfu-\bfu_h^{})=& 0  \quad & \forall \, q_h \in Q_h\,. 
\end{array}
\right.
\end{equation}

\begin{remark} We remark that Method~\eqref{eq:FEMstab} can be also written, equivalently, in the following compact form:
Find $\bfu_h^{}\in\bcalV_h^{}$ such that
\begin{equation}\label{compact}
a(\bfu_h^{},\bfv_h^{}) +S(\bfu_h^{},\bfv_h^{}) =  L(\bfv_h^{}) \quad  \forall \, \bfv_h \in  \bcalV_h\,.
\end{equation}
This simplified form may be chosen for the analysis, as it does not involve the discrete pressure. However,
we do prefer to write \eqref{eq:FEMstab} involving both pressure and velocity, as \eqref{compact} can not be
implemented in an easy way, due to the necessity to identify the exactly divergence-free space
$\bcalV_h^{}$, and its basis functions. This task is, in general, not straightforward.
\end{remark}

\begin{remark}
In case of classical LBB-stable methods like the Taylor--Hood, Bernardi--Raugel, 
or the mini elements,
a similar approach employing the corresponding space
of discretely divergence-free vector fields (still denoted by $\bcalV_h^{}$, but note that its elements
are no longer exactly divergence-free) would lead to:
for all $\bfv_h \in \bcalV_h^{}$ it holds
\begin{equation}
  a(\bfu - \bfu_h, \bfv_h) +S(\bfu - \bfu_h, \bfv_h) = -(\nabla p, \bfv_h)_\Omega^{},
\end{equation}
i.e., a consistency error of the form $-(\nabla p, \bfv_h)_\Omega^{}$ appears.
Introducing the notion of a discrete Helmholtz--Hodge projector
$\mathbb{P}_h$ \cite{lr:jcp:2019,LinkeMerdon16} as
the $L^2(\Omega)$-projection onto the space of discretely divergence-free
vector fields $\bcalV_h^{}$, one recognizes that this consistency error quantifies
nothing else than the strength of this discrete Helmholtz--Hodge projector.
Note that the continuous Helmholtz--Hodge projector of any gradient field
$\nabla \phi \in L^2(\Omega)^d$ is zero, i.e., it has a very similar meaning
as $\curl \nabla \phi = \bfzero$, see \cite{JLMNR2017}.
For a LBB-stable method with a discrete pressure space with elementwise
polynomials of order $k_p$, it is a classical result  that the discrete
Helmholtz--Hodge projector of any smooth gradient fields vanishes with order
$k_p+1$ in the following discrete $\bcalV_h'$-norm (that can be interpreted as
a $H^{-1}(\Omega)^d$ semi norm):
$$
  \sup_{\bfzero \not= \bfv_h^{} \in \bfV_h^{}}
    \frac{|(\nabla \phi, \bfv_h^{})_\Omega^{}|}{\| \nabla \bfv_h^{} \|_{0,\Omega}^{}}
     \leq C h^{k_p + 1} | \phi |_{k_p,\Omega}^{}.
$$
But if one estimates the strength of the discrete Helmholtz--Hodge projector
in a dual seminorm linked to $L^2(\Omega)$, one only obtains:
\begin{equation} \label{eq:disc:helm:projector:gradient:fields}
  \sup_{\bfzero \not= \bfv_h^{} \in \bfV_h^{}}
    \frac{|(\nabla \phi, \bfv_h^{})_\Omega^{}|}{\| \bfv_h^{} \|_{0,\Omega}^{}}
     \leq C h^{k_p} | \phi |_{k_p+1,\Omega}^{}\,,
\end{equation}
see \cite{lr:jcp:2019}. We conjecture that
\eqref{eq:disc:helm:projector:gradient:fields} essentially explains why
it was not possible in the past to get an improved convergence order
$h^{k+\frac{1}{2}}$ for advection stabilization of different order
 LBB-stable methods like the Taylor--Hood or Bernardi--Raugel elements. The culprit of this 
 behavior are gradient fields in the momentum balance.
 Note that the discrete Helmholtz--Hodge projector
 of any pressure-robust method vanishes for arbitrary gradient fields
 \cite{lr:jcp:2019,LinkeMerdon16}, and thanks to the link between the mini element
and equal-order $\mathbb{P}_1^{}\times \mathbb{P}_1^{}$ elements, an improved
$O(h^{k+\frac{1}{2}})$ order for the velocity can also be proven for the former under advection
stabilization.
\end{remark}

\section{Analysis of the approximation error} \label{sec:numerical:analysis}

The two following results are classical, and will be used in the proof of our error estimates. The first is the following local trace
inequality: there exists $C>0$ such that for all $K\in \calT_h^{}$, $F\in\calF_K^{}$, and all $v\in H^1(K)$,
\begin{equation}\label{local-trace}
\|v\|_{0,F}^{}\le C\Big(h_K^{-\frac{1}{2}}\|v\|_{0,K}^{}+h_K^{\frac{1}{2}}|v|_{1,K}^{}\Big)\,.
\end{equation}
We also recall the following inverse inequality: for all $\ell,s,m\in\mathbb{N}$ such that $0\le \ell\le s\le m$ and all $q\in \mathbb{P}_m^{}(K)$ there exists $C>0$ such that
\begin{equation}\label{inverse}
|q|_{s,K}^{}\le Ch_K^{\ell-s}|q|_{\ell,K}^{}\,.
\end{equation}

Finally, as our main interest is to track the dependency of the error estimates  on the viscosity $\mu$, in order to avoid unnecessary technicalities, 
we will not track their dependency  on $\bfbeta$, or $\sigma$.

\subsection{An error estimate for the velocity}

In order to state the error estimates we define the following norm, for functions that are regular enough,
\begin{equation}\label{fancy-norm}
\| \bz\|_{\star}^2 :=\vertiii{\curl \bz}^2 + (h+\mu)\sum_{s=0}^4h^{2s-4}\|D^s\bz\|_h^2\,.
\end{equation}
Here, by $D^s\bz$ we mean the tensor $(\partial^{\bfalpha}\bz)_{|\bfalpha|=s}^{}$, this is, gradient for $s=1$, Hessian matrix for $s=2$, etc.
We start by proving a quasi-best approximation result with respect to this norm.

\begin{theorem}\label{thm:error_global}
Let $\bfu \in  H_0^1(\Omega)^d\cap H^{3}(\Omega)^d$ be the solution to \eqref{eq:oseen} and let $\bz$ be its corresponding potential given by Corollary \ref{cor1}. Let $(\bfu_h^{},p_h^{})$ be the solution of \eqref{eq:FEMstab}. If $d=3$ we assume, in addition,  that $\bfbeta\cdot\bfn=0$ on $\partial \Omega$.  Then, the following error estimate holds
\begin{equation}\label{error-est-1}
\vertiii{\bfu-\bfu_h^{}} \le  C \|\bz-\bfpsi_h\|_{\star} \qquad  \text{for all } \bfpsi_h \in \bZ_h. 
\end{equation}
The constant $C>0$ is independent of $h$ and $\mu$.
\end{theorem}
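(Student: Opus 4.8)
The plan is to combine the coercivity \eqref{elipt-a+s} with the Galerkin orthogonality \eqref{eq:gal_ortho} on the exactly divergence-free subspace. Given $\bfpsi_h \in \bZ_h$, set $\bfv_h := \curl \bfpsi_h$, which belongs to $\bcalV_h$ by (A4), and split $\bfu - \bfu_h = \boldsymbol{\eta} + \bfe_h$ with $\boldsymbol{\eta} := \bfu - \bfv_h = \curl(\bz - \bfpsi_h)$ and $\bfe_h := \bfv_h - \bfu_h \in \bcalV_h$. Since $\dive \bfe_h = 0$, the pressure terms in \eqref{eq:gal_ortho} drop, giving $(a+S)(\bfu - \bfu_h, \bfe_h) = 0$; combined with \eqref{elipt-a+s} this yields $\vertiii{\bfe_h}^2 = (a+S)(\bfe_h, \bfe_h) = -(a+S)(\boldsymbol{\eta}, \bfe_h)$. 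By the triangle inequality it therefore suffices to show $|(a+S)(\boldsymbol{\eta}, \bfe_h)| \le C \|\bz - \bfpsi_h\|_\star \vertiii{\bfe_h}$.

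The reaction, diffusion, and stabilization contributions are immediate. By Cauchy--Schwarz, $\sigma(\boldsymbol{\eta}, \bfe_h)$ and $\mu(\nabla \boldsymbol{\eta}, \nabla \bfe_h)$ are bounded by $\vertiii{\boldsymbol{\eta}}\,\vertiii{\bfe_h}$, while $|S(\boldsymbol{\eta}, \bfe_h)| \le |\boldsymbol{\eta}|_S\,|\bfe_h|_S \le \vertiii{\boldsymbol{\eta}}\,\vertiii{\bfe_h}$. Since $\boldsymbol{\eta} = \curl(\bz - \bfpsi_h)$, one has $\vertiii{\boldsymbol{\eta}} = \vertiii{\curl(\bz - \bfpsi_h)} \le \|\bz - \bfpsi_h\|_\star$, which is exactly the first term of the $\star$-norm. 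Hence everything except the convective term $((\bfbeta \cdot \nabla)\boldsymbol{\eta}, \bfe_h)$ is routine.

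The convective term is the crux, and it is where pressure-robustness and the vorticity stabilization enter. I first integrate by parts: using $\dive \bfbeta = 0$ and the vanishing of $\boldsymbol{\eta}$ and $\bfe_h$ on $\partial\Omega$, one gets $((\bfbeta \cdot \nabla)\boldsymbol{\eta}, \bfe_h) = -(\boldsymbol{\eta}, (\bfbeta \cdot \nabla)\bfe_h)_h$. Next I write $\boldsymbol{\eta} = \curl \boldsymbol{\phi}$ with $\boldsymbol{\phi} := \bz - \bfpsi_h$ and integrate by parts element-wise against the curl. Because $\boldsymbol{\phi}$ is single-valued across interior facets, this produces a volume term $-(\boldsymbol{\phi}, \curl((\bfbeta \cdot \nabla)\bfe_h))_h$ together with a tangential-jump term $\sum_{F \in \calF^i} \langle \boldsymbol{\phi}, \jump{(\bfbeta \cdot \nabla)\bfe_h \times \bfn}\rangle_F$; the facet contributions on $\partial\Omega$ vanish thanks to $\boldsymbol{\phi} = 0$ there when $d = 2$ (Corollary~\ref{cor1} together with (A6)) and to $\bfbeta \cdot \bfn = 0$ there when $d = 3$. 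The jump term is tailored to the jump part of $S$: by Cauchy--Schwarz and the trace inequality \eqref{local-trace}, it is bounded by $\big(\sum_{F} h^{-2}\|\boldsymbol{\phi}\|_{0,F}^2\big)^{\frac12} |\bfe_h|_S$, and the first factor is controlled by the $s = 0, 1$ terms of the $\star$-norm.

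For the volume term I use $\curl((\bfbeta \cdot \nabla)\bfe_h) = \curl \calL \bfe_h - \sigma \curl \bfe_h + \mu \curl \Delta \bfe_h$. Pairing the $\curl \calL \bfe_h$ piece with $\boldsymbol{\phi}$ and inserting the weight $\tau$, the least-squares part of $S$ controls $\|\tau^{\frac12} \curl \calL \bfe_h\|_h \le |\bfe_h|_S$, while the definition \eqref{tau-def} is made precisely so that $\tau^{-1} \lesssim (h+\mu)h^{-4}$, whence $\|\tau^{-\frac12}\boldsymbol{\phi}\|_h$ is bounded by the $s=0$ term of the $\star$-norm. The reaction remainder $\sigma \curl \bfe_h$ is integrated back by parts to $\sigma(\boldsymbol{\eta}, \bfe_h)$ and estimated as before. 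The delicate piece is $\mu \curl \Delta \bfe_h$: using the inverse inequality \eqref{inverse} to trade the extra derivatives on the polynomial $\bfe_h$ for negative powers of $h$ and a factor $\mu$, and the trace inequality on $\boldsymbol{\phi}$, this term is absorbed into $\|\mu^{\frac12}\nabla \bfe_h\|_{0,\Omega} \le \vertiii{\bfe_h}$ and the $(h+\mu)$-weighted derivative terms of the $\star$-norm, the $\min$ in \eqref{tau-def} (i.e.\ the genuinely convection-dominated regime $\mu \lesssim h$) being what keeps the constants independent of $\mu$. Collecting all bounds gives $|(a+S)(\boldsymbol{\eta}, \bfe_h)| \le C\|\bz - \bfpsi_h\|_\star \vertiii{\bfe_h}$ and hence \eqref{error-est-1}. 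The main obstacle throughout is this convective term: the two integrations by parts must be arranged so that every resulting contribution lands on a quantity the vorticity stabilization actually controls, the $\mu\curl\Delta$ contribution being the most technical.
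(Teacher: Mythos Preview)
Your argument is correct and shares the paper's central idea---integrating by parts against the potential $\bz-\bfpsi_h$ so that the vorticity stabilization can absorb the resulting $\curl\calL$ and tangential-jump contributions---but the organisation is a genuine variant. The paper tests the \emph{full} error $\bfe=\bfu-\bfu_h$ against the interpolation error $\bfu-\bfw_h$, rewrites $a(\bfe,\bfu-\bfw_h)=(\calL\bfe,\bfu-\bfw_h)_h+\text{diffusion corrections}$, and then performs the curl integration by parts on $(\calL\bfe,\curl(\bz-\bfpsi_h))_h$; the $-\mu\Delta$ contributions, both in the bulk and in the facet jumps, are handled by adding and subtracting $\bfw_h$ so that inverse inequalities can be applied to the polynomial part $\bfw_h-\bfu_h$. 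You instead test the interpolation error $\boldsymbol{\eta}$ against the discrete error $\bfe_h$, first shift the convective derivative onto $\bfe_h$ by skew-symmetry, and only then do the curl integration by parts; this places all higher derivatives on the piecewise polynomial $\bfe_h$ from the outset, so the inverse-inequality steps are slightly more direct (no add/subtract of $\bfw_h$), and the facet term involves only $\jump{(\bfbeta\cdot\nabla)\bfe_h\times\bfn}$, exactly what $S$ penalises. Both routes arrive at the same estimate with constants independent of $\mu$. One small point of clarity: in your handling of $\mu(\boldsymbol{\phi},\curl\Delta\bfe_h)_h$ it is not the $\min$ in $\tau$ that keeps the constant $\mu$-free, but rather the $\mu$ part of the $(h+\mu)$ weight in the $\star$-norm, since $\mu h_K^{-4}\le(h_K+\mu)h_K^{-4}$; the remark about $\tau$ there is misleading, though the bound itself is fine.
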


\begin{proof}
Let $\bfe = \bfu-\bfu_h^{}$. We let $\bfpsi_h \in \bZ_h$ be arbitrary and set $\bfw_h:=\curl \bfpsi_h$. We note that $\bfw_h \in \bcalV_h$ and then, using the Galerkin orthogonality \eqref{eq:gal_ortho} we have 
\begin{equation}\label{the-first}
\vertiii{\bfe}^2= a(\bfe,\bfu-\bfw_h^{})+S(\bfe, \bfu-\bfw_h^{}). 
\end{equation}
We bound the right-hand side of \eqref{the-first} term by term. For the rest of the proof, $\epsilon>0$ is arbitrary  but will be chosen sufficiently small later. 
Using Cauchy--Schwarz's and Young's inequalities we see that
\begin{equation}\label{error-proof-3}
S(\bfe, \bfu - \bfw_h^{}) \le \epsilon \vertiii{\bfe}^2 + C \vertiii{\bfu - \bfw_h}^2.
\end{equation}
We re-write the first term in \eqref{the-first} by  adding  and subtracting (element-wise) $\mu\Delta\bfe$ to obtain
\begin{equation}\label{error-proof-4}
a(\bfe, \bfu - \bfw_h) = (\calL \bfe,\bfu - \bfw_h^{})_h^{} + (\mu\Delta\bfe,\bfu - \bfw_h)_h^{}+(\mu \nabla\bfe,\nabla (\bfu - \bfw_h))_\Omega^{}\,.
\end{equation}
To bound the third  term on the right-hand side of \eqref{error-proof-4}, we  proceed as in \eqref{error-proof-3} we get 
\begin{equation*}
(\mu \nabla\bfe,\nabla (\bfu - \bfw_h))_\Omega \le  \epsilon\vertiii{\bfe}^2+ C \vertiii{\bfu - \bfw_h}^2.
\end{equation*}
For the second term in \eqref{error-proof-4}
we add and subtract $\bfw_h$, use an inverse inequality and Young's inequalities, and arrive at
\begin{alignat*}{1}
 &(\mu\Delta\bfe,\bfu - \bfw_h)_h= (\mu \Delta(\bfu-\bfw_h), \bfu - \bfw_h)_h+ (\mu \Delta(\bfw_h-\bfu_h), \bfu - \bfw_h)_h \\
   \le & \, \frac{1}{2} \|h \sqrt{\mu} \Delta (\bfu - \bfw_h)\|_h^2+ \frac{1}{2}\left(1+\frac{1}{\epsilon}\right)\|h^{-1}\sqrt{\mu}(\bfu - \bfw_h)\|_h^2 + \frac{\epsilon}{2} \|h \sqrt{\mu} \Delta (\bfw_h - \bfu_h)\|_h^2 \\
  \le  & \, \frac{1}{2} \|h \sqrt{\mu} \Delta (\bfu - \bfw_h)\|_h^2+ \frac{1}{2}\left(1+\frac{1}{\epsilon}\right) \|h^{-1}\sqrt{\mu}(\bfu - \bfw_h)\|_h^2 + C\,\frac{\epsilon }{2} \| \sqrt{\mu}\, \nabla(\bfw_h - \bfu_h)\|_h^2 \\
\le  & \, C\mu\Big( \|h  \Delta (\bfu - \bfw_h)\|_h^2+ \|\nabla(\bfu-\bfw_h^{})\|^2_{0,\Omega}+\|h^{-1}(\bfu - \bfw_h)\|_h^2\Big) + C\,\frac{\epsilon }{2} \| \sqrt{\mu}\, \nabla(\bfu - \bfu_h)\|_h^2 \\
\le &\,   C \mu\sum_{s=1}^3h^{2s-4}\|D^{s}(\bz - \bfpsi_h)\|^2_h + C \epsilon\,  \| \sqrt{\mu}\, \nabla \bfe\|_h^2 \\
\le & \,  C \|\bz - \bfpsi_h\|_{\star}^2 + C \epsilon\,\vertiii{\bfe}^2.
\end{alignat*}

We are only left with the bound for the first term on the right-hand side of \eqref{error-proof-4}. 
 First, integrating by parts we rewrite it as follows
\begin{alignat}{1}
 (\calL \bfe,\bfu - \bfw_h)_h =&  (\calL\bfe,\curl(\bz-\bfpsi_h))_h=(\curl \calL\bfe, \bz-\bfpsi_h)_h +  \bl \jump{\calL  \bfe \times \bfn}, \bz-\bfpsi_h \br_{\calF}. \label{eq:res_stab}
\end{alignat}
Applying the Cauchy--Schwarz's and Young's inequalities leads to the following bound for the first term in the right-hand side of \eqref{eq:res_stab} 
\begin{equation}
(\curl \calL\bfe, \bz-\bfpsi_h)_h \le  \,\epsilon \|\tau^{\frac{1}{2}}\curl \calL\bfe\|_h^2+C  \|\tau^{-\frac{1}{2}} (\bz-\bfpsi_h)\|_h^2 \\
 \le  \,\epsilon \vertiii{\bfe}^2 + C  \|\bz-\bfpsi_h\|_{\star}^2\,,
\end{equation}
where in the last step we used that $\|\tau^{-\frac{1}{2}} (\bz-\bfpsi_h)\|_h^2\le C\,\|\bz-\bfpsi_h\|_\star^2$, independently of the value of $\mu$.
Next, for $d=2$ we use that $\bz-\bfpsi_h=\bfzero $ (that follows from  Assumption (A6)). In the case $d=3$ we decompose 
$\bfbeta = \bfbeta\cdot\bfn\,\bfn + (\bfbeta-\bfbeta\cdot\bfn\,\bfn)=:\bfbeta_n^{}+\bfbeta_t^{}$. Since $\bfbeta_t^{}$ is parallel to the
boundary $\partial\Omega$, we have that $\bfbeta_t^{}\cdot\nabla\bfe = 0$ (since $\bfe=\bfzero$ on $\partial\Omega$). So, using 
$\bfe=\bfzero$  and $\bfbeta_n^{}=0$ (if $d=3$) on $\partial\Omega$ we see that the second term is equal to
\begin{align}
 \bl \jump{\calL \bfe \times \bfn}, \bz-\bfpsi_h \br_{\calF} &=   \bl \jump{(\bfbeta \cdot \nabla) \bfe \times \bfn}, \bz-\bfpsi_h \br_{\calF^i}
+\underbrace{\bl (\bfbeta \cdot \nabla) \bfe \times \bfn, \bz-\bfpsi_h \br_{\partial\Omega}^{}}_{=0} \nonumber\\
&\qquad +\bl \jump{ -\mu \Delta \bfe \times \bfn}, \bz-\bfpsi_h \br_{\calF} \nonumber\\
& =\bl \jump{(\bfbeta \cdot \nabla) \bfe \times \bfn}, \bz-\bfpsi_h \br_{\calF^i}
+\bl \jump{ -\mu \Delta \bfe \times \bfn}, \bz-\bfpsi_h \br_{\calF} \,. \label{MM}
\end{align}
To bound the first term we use Young's inquality and the local trace theorem \eqref{local-trace} to get to
\begin{align*}
  \bl \jump{\bfbeta \cdot \nabla \bfe \times \bfn}, \bz-\bfpsi_h \br_{\calF^i} \le & \,\epsilon \| h \jump{(\bfbeta \cdot \nabla) \bfe \times \bfn}\|_{h,\calF^i}^2+ C \| h^{-1} (\bz-\bfpsi_h)\|_{h,\calF^i}^2  \\
\le & \, C\,\epsilon \vertiii{\bfe}^2 +   C\,\sum_{s=0}^1 h^{2s-3}\|D^s(\bz-\bfpsi_h)\|^2_h\\
   \le & \,C\,\epsilon \vertiii{\bfe}^2 + C  \|\bz-\bfpsi_h\|_{\star}^2.
\end{align*}
For the remaining term in \eqref{MM} we add and subtract $\bfw_h$ and get
\begin{alignat}{1}
\bl \jump{ -\mu \Delta \bfe\times \bfn}, \bz-\bfpsi_h \br_{\calF}=& \bl \jump{ -\mu \Delta (\bfu-\bfw_h)\times \bfn}, \bz-\bfpsi_h \br_{\calF}   + \bl \jump{ -\mu \Delta (\bfw_h-\bfu_h) \times \bfn}, \bz-\bfpsi_h \br_{\calF}. \label{713}
\end{alignat}
To bound the first term of \eqref{713} we apply Cauchy--Schwarz's and Young's inequalities, and the local trace result \eqref{local-trace} to arrive at
\begin{align*}
&\bl \jump{ -\mu \Delta (\bfu-\bfw_h)\times \bfn}, \bz-\bfpsi_h \br_{\calF}\le \frac{1}{2}  \|h^{3/2} \sqrt{\mu} \jump{\Delta (\bfu-\bfw_h)\times \bfn} \|_{h,\calF}^2  + \frac{1}{2}\| h^{-3/2} \sqrt{\mu}(\bz-\bfpsi_h)\|_{h,\calF}^2\\
\le& C\,\mu\, \Big( h^2\|\Delta(\bfu-\bfw_h^{})\|^2_h+h^4\|\nabla\Delta (\bfu-\bfw_h^{})\|^2_h + h^{-4}\|\bz-\bfpsi_h^{}\|^2_h+h^{-2}\|\nabla (\bz-\bfpsi_h^{})\|^2_h\Big)\\
\le& C\mu\,\sum_{s=0}^4 h^{2s-4}\|D^s(\bz-\bfpsi_h^{})\|^2_h\\
\le& C\,\|\bz-\bfpsi_h^{}\|_\star^2\,.
\end{align*}
For the second term on \eqref{713}  we use  Cauchy-Schwarz's inequality, the local trace result \eqref{local-trace}, the  inverse estimate \eqref{inverse}, and Young's inequality, leading to
\begin{alignat*}{1}
 \bl \jump{ -\mu \Delta (\bfw_h-\bfu_h)\times \bfn}, \bz-\bfpsi_h \br_{\calF} \le &\, \epsilon \vertiii{\bfw_h-\bfu_h}^2 +C \| h^{-3/2} \sqrt{\mu}(\bz-\bfpsi_h)\|_{h,\calF}^2 \\
  \le &\, 2\epsilon \vertiii{\bfe}^2 +  C  \|\bz-\bfpsi_h\|_{\star}^2\,.
\end{alignat*}
Hence, combining the above results and inserting the bounds into \eqref{the-first} gives
\begin{alignat*}{1}
\vertiii{\bfe}^2 \le C \epsilon \vertiii{\bfe}^2 +  C  \|\bz-\bfpsi_h\|_{\star}^2.
\end{alignat*}
Taking $\epsilon$ sufficiently small and re-arranging terms finishes the proof. 
\end{proof}

The last result stresses the fact that the approximation of the solution depends only on how well the space $\bZ_h$ approximates the space $\bZ$, or, in other words,
on how well the potential $\bz$ is approximated by $\bZ_h$. To make this bound more precise, we use Assumption (A5) and Corollary~\ref{cor1} to obtain the following result.

\begin{corollary}\label{bound-error}
Let us assume, in addition to the hypotheses of Theorem~\ref{thm:error_global}, that $\bfu \in  H_0^1(\Omega)^d\cap H^{k+1}(\Omega)^d$. Then, there
exists a constant $C>0$, independent of $h$ and $\mu$, such that
\begin{equation}\label{final-bound}
\vertiii{\bfu-\bfu_h^{}} \le  Ch^{k}\big(h^{\frac{1}{2}}+\mu^{\frac{1}{2}}\big)\|\bfu\|_{k+1,\Omega}^{}\,.
\end{equation}
\end{corollary}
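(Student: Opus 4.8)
The plan is to turn the quasi-best approximation bound \eqref{error-est-1} into a concrete rate by estimating $\inf_{\bfpsi_h\in\bZ_h}\|\bz-\bfpsi_h\|_\star$. Since Theorem~\ref{thm:error_global} holds for \emph{every} $\bfpsi_h\in\bZ_h$, it suffices to exhibit one good choice. First I would fix the potential: as $\bfu\in H^{k+1}(\Omega)^d$ is divergence-free, Proposition~\ref{prop1} with $r=k+1$ (and Corollary~\ref{cor1} when $d=2$) provides $\bz\in H^{k+2}(\Omega)^d$ with $\curl\bz=\bfu$ and $\|\bz\|_{k+2,\Omega}^{}\le C\|\bfu\|_{k+1,\Omega}^{}$; for $k\ge 2$ this $\bz$ also carries the $H^4$-regularity used in Theorem~\ref{thm:error_global}. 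Then I would let $\bfpsi_h\in\bZ_h$ be the approximant furnished by Assumption (A5) and set $\bfw_h:=\curl\bfpsi_h\in\bcalV_h^{}$, so that $\curl(\bz-\bfpsi_h)=\bfu-\bfw_h$.

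Next I would bound the two contributions to $\|\bz-\bfpsi_h\|_\star^2$ in \eqref{fancy-norm}. For the derivative sum, Assumption (A5) gives $\|D^s(\bz-\bfpsi_h)\|_h^{}\le Ch^{k+2-s}\|\bz\|_{k+2,\Omega}^{}$ for $0\le s\le k+1$, so that $h^{2s-4}\|D^s(\bz-\bfpsi_h)\|_h^2\le Ch^{2k}\|\bz\|_{k+2,\Omega}^2$ for each such $s$; the remaining terms $k+2\le s\le 4$, which arise only for the smallest admissible degrees, are controlled by combining $\bz\in H^{k+2}(\Omega)^d$ with the fact that $\bfpsi_h$ is a piecewise polynomial of fixed degree, whose top-order derivatives either vanish or are absorbed by an inverse estimate, again producing the factor $h^{2k}$. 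Summing and multiplying by $(h+\mu)$ yields $(h+\mu)\sum_{s=0}^{4}h^{2s-4}\|D^s(\bz-\bfpsi_h)\|_h^2\le C(h+\mu)h^{2k}\|\bz\|_{k+2,\Omega}^2$.

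For the remaining term $\vertiii{\curl(\bz-\bfpsi_h)}^2=\vertiii{\bfu-\bfw_h}^2$ I would treat the three pieces of \eqref{norm} in turn. The $\sigma$-piece reduces to $\|D^1(\bz-\bfpsi_h)\|_h^2=O(h^{2k+2})$ and the viscous piece to $\mu\|D^2(\bz-\bfpsi_h)\|_h^2=O(\mu h^{2k})$. The stabilization seminorm $S(\bfu-\bfw_h,\bfu-\bfw_h)$ is the core of the argument: expanding $\curl\calL\,\curl(\bz-\bfpsi_h)$, its leading contributions involve $D^3(\bz-\bfpsi_h)$ (first-order part) and $\mu D^4(\bz-\bfpsi_h)$ (diffusive part). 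Inserting the definition \eqref{tau-def} and using that $\tau\le h^3/\|\bfbeta\|_{\infty,\Omega}^{}$ and $\tau\mu\le h^4$ in all regimes, the diffusive contribution is $O(\mu h^{2k})$ and the first-order contribution is $O(h^{2k+1})$; the tangential-jump term is reduced from facets to elements by the local trace inequality \eqref{local-trace} and also gives $O(h^{2k+1})$.

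Collecting these bounds gives $\|\bz-\bfpsi_h\|_\star^2\le C(h+\mu)h^{2k}\|\bz\|_{k+2,\Omega}^2\le C(h+\mu)h^{2k}\|\bfu\|_{k+1,\Omega}^2$, and inserting this into \eqref{error-est-1}, together with $(h+\mu)^{1/2}\le h^{1/2}+\mu^{1/2}$, produces \eqref{final-bound}. I expect the main obstacle to be the $\mu$-uniform bookkeeping inside the stabilization seminorm: every factor of $\mu$ appearing through $\curl\calL$ must be exactly compensated by the negative powers of $h$ built into $\tau$, so that both branches of the $\min$ in \eqref{tau-def} yield clean factors $h^{2k+1}$ or $\mu h^{2k}$. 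Correctly tracking the fourth-order derivatives $D^4(\bz-\bfpsi_h)$ generated by the term $\mu\,\curl\Delta\,\curl(\bz-\bfpsi_h)$, and confirming they are covered by (A5) or, for small $k$, by the polynomial degree of $\bfpsi_h$, is the step requiring the most care.
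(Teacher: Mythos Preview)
Your approach is correct and follows exactly the route the paper takes: the paper gives no detailed proof for this corollary, simply citing Assumption~(A5) and Corollary~\ref{cor1}, which is precisely what you invoke. One small caveat: for the terms with $s>k+1$ in the $\|\cdot\|_\star$-sum, the inverse-estimate branch you sketch actually loses a factor $h^2$ (you get $h^{2k-2}$ instead of $h^{2k}$); the argument that works is your other branch, where the top-order elementwise derivatives of $\bfpsi_h$ vanish because $\bZ_h$ has degree $k+1$ in the concrete examples, together with $\bz\in H^{k+2}$ bounding $\|D^s\bz\|_h$ directly.
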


Two conclusions can be drawn from this last result. First, that Method~\eqref{eq:FEMstab} has optimal, pressure-robust convergence rates. In addition, if the extra hypothesis
$\mu\le Ch$ is imposed, then \eqref{final-bound} leads to an $O(h^{k+\frac{1}{2}})$ error estimate. This sort of estimate has only been obtained very recently for an incompressible
problem using RT and BDM spaces in \cite{BBG20}, and, up to our best knowledge, the present result constitutes the first time such an estimate is obtained for
stabilized methods for the Oseen equation. We stress that the shape of the stabilization used is essential to obtain these results.

\color{black}


\subsection{An error estimate for the pressure}
For regular enough solutions (at least $H^3(\Omega)^d$ for the velocity), we will now show a 
superclosedeness result for the discrete pressure that depends on the velocity error estimate
only, which makes it pressure-robust. We denote by $\pi_h^{}:L^2(\Omega)\to Q_h^{}$ the $L^2(\Omega)$ orthogonal
projection onto $Q_h^{}$. 

Thanks to the Galerkin orthogonality \eqref{eq:gal_ortho} and the fact that  $\dive \bfV_h^{}\subseteq Q_h^{}$ (see (A1)) we get, for an arbitrary $\bfv_h^{}\in\bfV_h^{}$,
\begin{align*}
  a(\bfu-\bfu_h, \bfv_h) + S(\bfu-\bfu_h, \bfv_h)
   & = (p - p_h, \nabla \cdot \bfv_h)_\Omega^{} \\
   & = (\pi_h p - p_h, \nabla \cdot \bfv_h)_\Omega^{}.
\end{align*}
The $\bfV_h^{}\times Q_h^{}$ is an inf-sup stable pair (see (A2)), this guarantees the existence of a Fortin operator onto $Q_h^{}$ that commutes with the
divergence. Since, in addition $\dive \bfV_h^{} \subseteq Q_h^{}$ (see (A1)), then this operator is surjective. So, there  exists
a $\bfx_h^{}  \in \bfV_h^{}$ such that
\begin{equation}\label{inf-sup}
\dive \bfx_h^{}  = \pi_h^{}  p - p_h^{} \quad\textrm{ in }\Omega\qquad\textrm{and}\qquad \| \nabla \bfx_h^{}  \|_{0,\Omega}^{} \le C \| \pi_h^{}  p - p_h^{}  \|_{0,\Omega}^{}\,,
\end{equation}
where $C>0$ only depends on $\Omega$. 
Thus, integrating by parts, using that $\dive\bfbeta=0$, and  Cauchy--Schwarz's 
inequality, we arrive at
\begin{align}
  \| \pi_h^{} p - p_h^{} \|^2_{0,\Omega} & =
  a(\bfu-\bfu_h^{} , \bfx_h^{} ) + S(\bfu - \bfu_h^{} , \bfx_h^{} ) \nonumber\\
   & \leq  \vertiii{\bfu - \bfu_h^{} } \cdot \vertiii{\bfx_h^{} }    -((\bfbeta \cdot \nabla)\bfx_h^{} , \bfu-\bfu_h^{} )_\Omega^{} \nonumber\\
  & \leq  \vertiii{\bfu - \bfu_h^{} } \cdot \vertiii{\bfx_h^{} }
    + \|\bfbeta\|_{\infty,\Omega}^{} \| \bfu-\bfu_h^{} \|_{0,\Omega}^{}     \|\nabla \bfx_h^{}  \|_{0,\Omega}^{}\,. \label{almost-there}
\end{align}
Thanks to the stability result in \eqref{inf-sup}, once the bound $\vertiii{\bfx_h^{}}\le C\,\|\pi_h^{}p-p_h^{}\|_{0,\Omega}^{}$ is established, then 
\eqref{almost-there} provides an error estimate for $\pi_h^{}p-p_h^{}$ in terms of the velocity error estimate only. So, it only remains to bound
the triple norm of $\bfx_h^{}$. 
First, using the stability bound given in \eqref{inf-sup}  and the Poincar\'e inequality we get
\begin{align}
   \vertiii{\bfx_h^{}}
    & \leq \sigma^{\frac{1}{2}} \| \bfx_h^{} \|_{0,\Omega}^{}
     + \mu^{\frac{1}{2}}  \| \nabla \bfx_h^{} \|_{0,\Omega}^{}
     + |\bfx_h^{}|_S^{} \nonumber\\
     & \le C (\sigma^{\frac{1}{2}} + \mu^{\frac{1}{2}}) 
       \| \pi_h^{} p - p_h^{} \|_{0,\Omega}^{}
       +  |\bfx_h^{}|_S^{}\,. \label{last-bound}
\end{align}
Finally, using the inverse inequality \eqref{inverse}, the local trace result \eqref{local-trace}, and the definition of the $|\cdot|_S^{}$-seminorm, we get
\begin{equation}\label{bound-S}
  |\bfx_h^{}|_S^{} \leq C \left (  1+h^{\frac{1}{2}}+\mu\tau^{\frac{1}{2}} h^{-2}\right )    \| \pi_h p - p_h \|_{0,\Omega}^{}\,,
\end{equation}
where the constant $C$ depends on $\sigma$ and different norms of $\bfbeta$, but not on $\mu$.
Inserting \eqref{last-bound} and \eqref{bound-S} into \eqref{almost-there}, and using that $\mu\tau^{\frac{1}{2}} h^{-2}\le C\mu^{\frac{1}{2}}$, {\it regardless the value of $\mu$}, we have proven the following error estimate for the discrete pressure.

\begin{theorem} \label{thm:discrete:pressure:supercloseness}
Let us assume the hypotheses of Theorem~\ref{thm:error_global}. Then, there exists $C>0$, independent of $h$ and $\mu$, such that
\begin{equation}\label{error-p}
\| \pi_h^{} p-p_h^{}\|_{0,\Omega}^{}  \le
C  \left ( 1+\mu^{\frac{1}{2}} +h^{\frac{1}{2}}\right)\vertiii{\bfu - \bfu_h^{}}\,.
\end{equation}
\end{theorem}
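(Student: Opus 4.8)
The plan is to deduce the superclosedness estimate directly from the velocity error bound of Theorem~\ref{thm:error_global}, by testing the consistency identity against a discrete velocity whose divergence equals $\pi_h^{}p-p_h^{}$. First I would start from the Galerkin orthogonality \eqref{eq:gal_ortho}: for any $\bfv_h^{}\in\bfV_h^{}$ one has $a(\bfu-\bfu_h^{},\bfv_h^{})+S(\bfu-\bfu_h^{},\bfv_h^{})=(p-p_h^{},\dive\bfv_h^{})_\Omega^{}$. Since $\dive\bfv_h^{}\in Q_h^{}$ by (A1) and $\pi_h^{}$ is the $L^2(\Omega)$-projection onto $Q_h^{}$, the continuous pressure $p$ may be replaced by $\pi_h^{}p$ at no cost, so the right-hand side becomes $(\pi_h^{}p-p_h^{},\dive\bfv_h^{})_\Omega^{}$.

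Next I would invoke inf-sup stability (A2) together with (A1) to produce, via the associated surjective Fortin operator, a function $\bfx_h^{}\in\bfV_h^{}$ satisfying $\dive\bfx_h^{}=\pi_h^{}p-p_h^{}$ and $\|\nabla\bfx_h^{}\|_{0,\Omega}^{}\le C\|\pi_h^{}p-p_h^{}\|_{0,\Omega}^{}$, exactly as in \eqref{inf-sup}. Choosing $\bfv_h^{}=\bfx_h^{}$ turns the left-hand side into $\|\pi_h^{}p-p_h^{}\|_{0,\Omega}^2$. An integration by parts in the convective term, using $\dive\bfbeta=0$, followed by Cauchy--Schwarz, bounds this quantity by $\vertiii{\bfu-\bfu_h^{}}\,\vertiii{\bfx_h^{}}+\|\bfbeta\|_{\infty,\Omega}^{}\|\bfu-\bfu_h^{}\|_{0,\Omega}^{}\|\nabla\bfx_h^{}\|_{0,\Omega}^{}$. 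Since $\|\bfu-\bfu_h^{}\|_{0,\Omega}^{}$ and $\|\nabla\bfx_h^{}\|_{0,\Omega}^{}$ are both controlled by triple norms, the whole estimate reduces to bounding $\vertiii{\bfx_h^{}}$ by $\|\pi_h^{}p-p_h^{}\|_{0,\Omega}^{}$ and then dividing out one factor of $\|\pi_h^{}p-p_h^{}\|_{0,\Omega}^{}$.

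The crux is therefore the estimate $\vertiii{\bfx_h^{}}\le C(1+\mu^{1/2}+h^{1/2})\|\pi_h^{}p-p_h^{}\|_{0,\Omega}^{}$. The $\sigma^{1/2}\|\bfx_h^{}\|_{0,\Omega}^{}$ contribution follows from Poincar\'e and \eqref{inf-sup}, and the $\mu^{1/2}\|\nabla\bfx_h^{}\|_{0,\Omega}^{}$ contribution is immediate from \eqref{inf-sup}; these give the bound \eqref{last-bound}. The delicate part is the stabilization seminorm $|\bfx_h^{}|_S^{}$: since $\bfx_h^{}$ is a discrete function I would apply the inverse inequality \eqref{inverse} to the bulk term $\|\tau^{1/2}\curl\calL\bfx_h^{}\|_h^{}$ and the local trace inequality \eqref{local-trace} to the facet jump term, the lower-order and convective pieces producing the harmless factors contributing $1+h^{1/2}$ in \eqref{bound-S}. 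The step I expect to be the main obstacle — and the one that makes the estimate $\mu$-uniform — is the viscous piece $\mu\,\curl\Delta\bfx_h^{}$ of $\curl\calL\bfx_h^{}$, which the inverse estimate scales like $\mu h^{-2}\|\nabla\bfx_h^{}\|_h^{}$ and hence, after multiplication by $\tau^{1/2}$, yields the factor $\mu\tau^{1/2}h^{-2}$. This is precisely where the choice \eqref{tau-def} of $\tau$ is essential: from that definition one always has $\tau\le h^4/\mu$, whence $\tau^{1/2}\le h^2\mu^{-1/2}$ and therefore $\mu\tau^{1/2}h^{-2}\le C\mu^{1/2}$, regardless of the size of $\mu$. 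Collecting \eqref{last-bound} and \eqref{bound-S}, inserting into \eqref{almost-there}, and simplifying then yields \eqref{error-p}.
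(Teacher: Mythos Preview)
Your proposal is correct and follows essentially the same route as the paper: Galerkin orthogonality combined with (A1) to replace $p$ by $\pi_h p$, the Fortin/inf-sup construction of $\bfx_h$, integration by parts in the convective term, and then the bound on $\vertiii{\bfx_h}$ via Poincar\'e, \eqref{inf-sup}, and inverse/trace inequalities. Your identification of the viscous contribution $\mu\tau^{1/2}h^{-2}$ as the critical term and your justification that $\tau_K\le h_K^4/\mu$ (from $\min\{1,x\}\le x$ in \eqref{tau-def}) yields $\mu\tau^{1/2}h^{-2}\le\mu^{1/2}$ is exactly the mechanism the paper uses.
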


\begin{remark}
The last result states that the difference $\pi_h^{}p-p_h^{}$ satisfies the same error estimate as the velocity, independently of the value
of $\mu$. In particular, this difference behaves like $O(h^{k+\frac{1}{2}})$ in the convection dominated regime.
In addition, using the triangle inequality we get 
\begin{equation}\label{opt-bound-p}
  \| p - p_h^{} \|_{0,\Omega}^{} \leq \| p - \pi_h^{} p \|_{0,\Omega}^{}
   + \| \pi_h^{} p - p_h^{} \|_{0,\Omega}^{}\,. 
\end{equation}
This, combined with the bound proven in Theorem~\ref{thm:discrete:pressure:supercloseness} and the standard approximation properties of $\pi_h^{}$ (see, e.g., \cite{gr:book:1986}),
 gives an optimal order $O(h^k)$ error estimate for the pressure whenever
$Q_h^{}$ contains piecewise polynomials of order $k-1$ (the case of, e.g., Scott--Vogelius elements of order $k$), and the pressure $p$ is regular enough.
However, due to the degree of the polynomials belonging to $Q_h^{}$, this error  bound can not be improved.
\end{remark}

\section{Numerical examples} \label{sec:numerics}
This section illustrates the theoretical findings with several numerical examples and compares the streamline-upwind Petrov--Galerkin (SUPG) method with the new least-square vorticity stabilization (LSVS) applied to the Scott--Vogelius finite element method of order $2$,  given by
\[ \bfV_h^{}=\bfW_h^2\qquad \textrm{and}\qquad Q_h^{}:= \{ q_h^{}\in L^2_0(\Omega): q_h^{}|_K^{}\in \mathbb{P}_1^{}(K)\;,\;
\forall\, K\in \calT_h^{}\}\,.\] 
 Inf-sup stability is ensured on barycentric refined triangulations as the ones used in the examples below.
The detailed implementation is stated below and all computations were performed using the finite element package ParMooN~\cite{parmoon} and are compared and confirmed
with a code written using FENiCS~\cite{Fenics}. 

The discrete problem reads: Find  $(\bfu_h^{}, p_h^{}) \in \bfV_h^{} \times Q_h^{}$ such that, for all $(\bfv_h, q_h) \in \bfV_h \times Q_h^{}$, the following holds
\begin{align}\label{eq:disc}
a(\bfu_h^{},\bfv_h^{})+b(p_h^{},\bfv_h^{})+b(q_h^{},\bfu_h^{})  +  S_\text{stab}(\bfu_h^{},\bfv_h^{})
	=   L_\text{stab}(\bfv_h^{})\,,
\end{align}
where \(S_\text{stab}\) and \(L_\text{stab}\) can be, either the novel LSVS stabilisation given by \eqref{eq:stab_pv} an \eqref{RHS-method}, 
or the SUPG stabilization given by
\begin{align*}
    S_\text{SUPG}(\bfu_h^{},\bfv_h^{}) & := \delta_0^{}\sum_{K\in \calT_h^{}} h_K^2 \left( \calL\bfu_h^{}, \bfbeta \cdot \nabla \bfv_h^{}\right)_K^{}\,,\\
    L_\text{SUPG}(\bfv_h^{}) & := (\bff,\bfv_h^{})_\Omega^{}+ \delta_0^{}\sum_{K\in \calT_h^{}} h_K^2 (\bff, \bfbeta \cdot \nabla \bfv_h^{})_K^{}.
\end{align*}
 To assess the influence of the stabilization parameter \(\delta_0\) in SUPG and LSVS methods, the positive constant \(\delta_0\) varies across the wide range from \(10^{-5}\) to \(10^{3}\). Concerning the choice of stabilization parameter for convection-dominated problems, e.g., see~\cite{AM16}, a good parameter choice for the SUPG method is $\delta_0 \in (0,1)$. 
Based on a parameter study presented in the next section, and from previous experience (see, e.g., \cite{AJ20,AM16}), all the simulations for convergence studies were performed with 
$\delta_0=0.25$ for the SUPG method and  $\delta_0^{}=0.006$ for the LSVS method. Additionally,  Example 1, Figure~\ref{fig:param_exone_supg}, confirms that
the present method presents a much more robust behavior with respect to the value of $\delta_0^{}$ than the SV-SUPG method.

\subsection{Numerical results}
We visit four different examples of the steady-state Oseen problem
defined on the domain \(\Omega=(0,1)^2\). All calculations are carried out
on non-uniform grids. Thus, a sequence of shape-regular
unstructured grids was generated, and each of these grids was barycentrically
refined, thereafter, in order to guarantee inf-sup stability.
The coarsest grid is depicted in Fig.~\ref{mesh_information}.
The corresponding velocity and pressure degrees of freedoms are listed next to it. 
\begin{figure}[ht]
    \begin{minipage}{0.5\textwidth}
    \begin{center}
		\includegraphics[width=\textwidth]{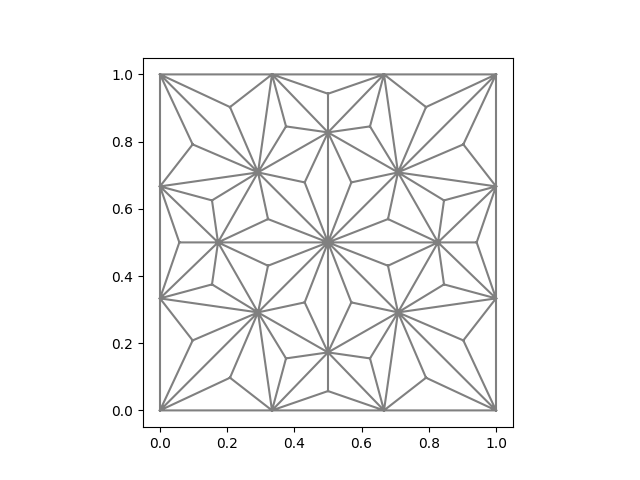}
	\end{center}
	\end{minipage}	
    \begin{minipage}{0.49\textwidth}
	\begin{center}
		\begin{tabular}{l|lll}
			level & \multicolumn{1}{c}{ndof $\bfu_h$} & \multicolumn{1}{c}{ndof $p_h$} & \multicolumn{1}{c}{total ndof}\\
			\hline
			1 &362   &252   &614 \\ 
			2 &1394  &1008  &2402\\ 
			3 &5474  &4032  &9506\\
			4 &21698 &16128 &37826\\
			5 &86402 &64512 &150914\\
		\end{tabular}	
	\end{center}
	\end{minipage}
		\caption{Initial mesh level 1 (left) and number of degrees of freedom for all refinement levels (right).}
		\label{mesh_information}
\end{figure}
In all the tables below, we use the following shorthand notation:
\begin{equation*}
L^2(u):=\|\bfu-\bfu_h^{}\|_{0,\Omega}^{}\quad,\quad H^1(u) := \|\nabla(\bfu-\bfu_h^{})\|_{0,\Omega}^{}\quad,\quad L^2(p):=\|p-p_h^{}\|_{0,\Omega}^{}\,.
\end{equation*}

\subsubsection{Example 1: Potential flow example}
The first example concerns a steady potential flow of the form \(\bfu = \nabla h\) with 
harmonic potential \(h=x^3-3xy^2\). Then, the solution 
\[
(\bfu, p) = \left( \nabla h, -\frac12 |\nabla h|^2 + \frac{14}{5} \right)\,,
\]
satisfies the Oseen problem~\eqref{eq:oseen} with the source term $\bff = \bfzero, \bfbeta = \bfu$, and inhomogeneous Dirichlet boundary conditions.

Figures~\ref{fig:nu_exone_gal}, \ref{fig:nu_exone_supg} and~\ref{fig:nu_exone_lsvs} display the results obtained by the plain divergence-free Galerkin
Scott-Vogelius finite element method (SV), the novel least-square vorticity convection stabilization (SV-LSVS) method
and the classical streamline-upwind Petrov-Galerkin (SV-SUPG) method, respctively, on refinement level 2 and the two parameter choices \(\sigma=0\) and \(\sigma=1\).

\begin{figure}[!ht]
	\centering
	\includegraphics[width=0.48\textwidth]{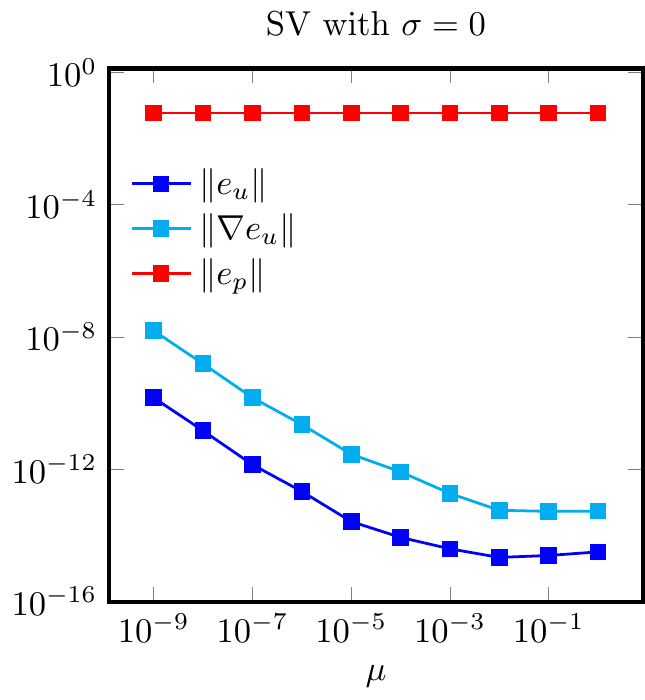}
	\includegraphics[width=0.48\textwidth]{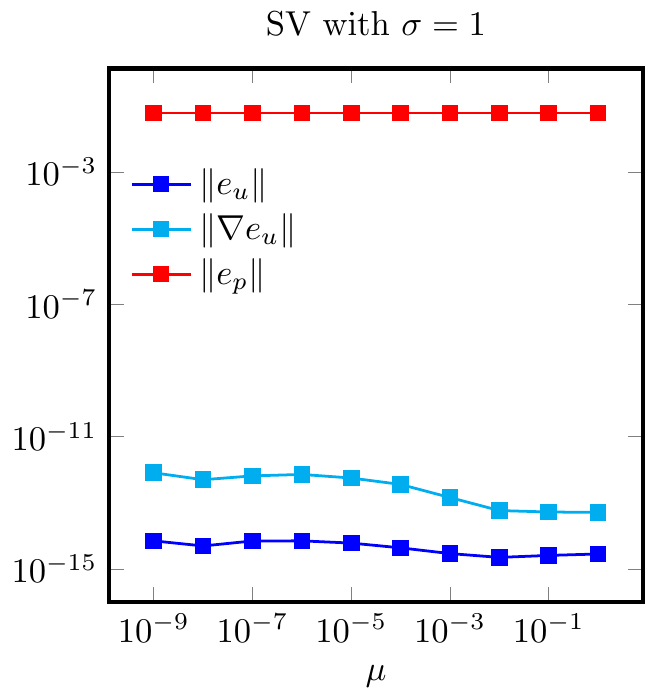}
	\caption{Example 1: error plots of different norms vs the viscosity parameter \(\mu\)
	for Scott-Vogelius finite element methods on refinement level 2 (\(\sigma=0\) left and \(\sigma=1\) right).} \label{fig:nu_exone_gal}
\end{figure}
\begin{figure}[!ht]
	\centering
	\includegraphics[width=0.48\textwidth]{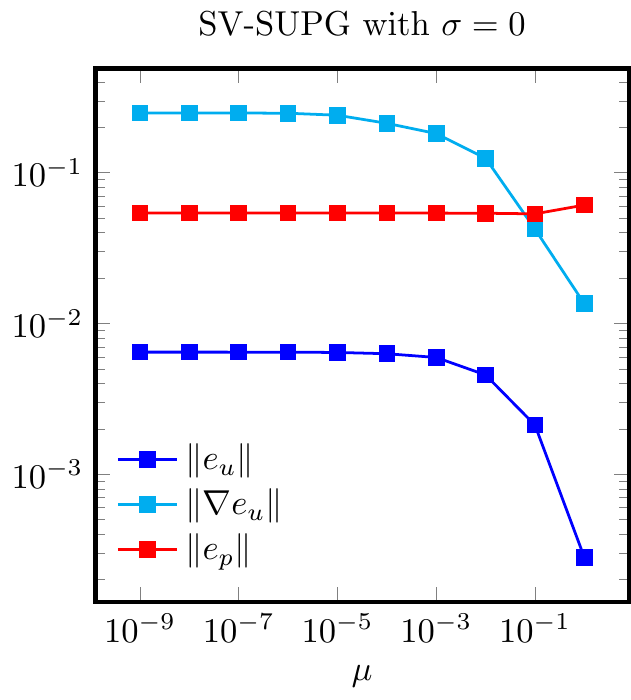}
	\includegraphics[width=0.48\textwidth]{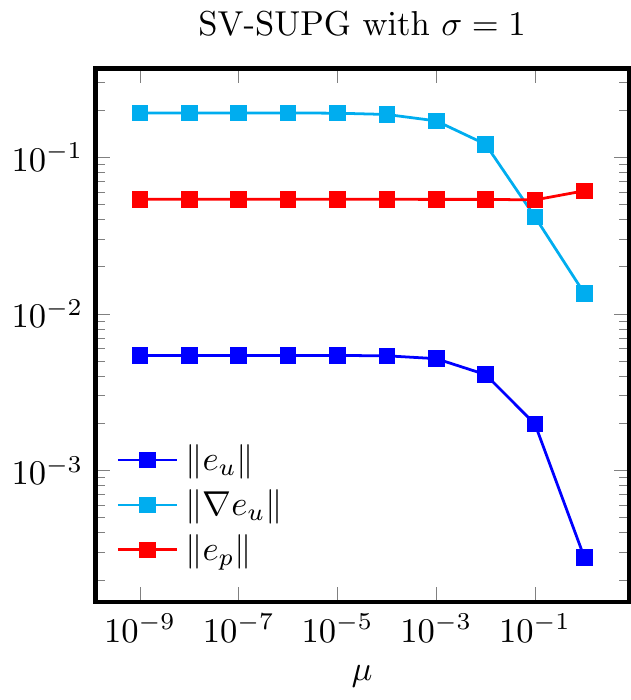}
	\caption{Example 1: error plots of different norms vs the viscosity parameter \(\mu\)
		for Scott-Vogelius element with SUPG stabilization on refinement level 2 (\(\sigma=0\) left and \(\sigma=1\) right).} \label{fig:nu_exone_supg}
\end{figure}
\begin{figure}[!ht]
	\centering
	\includegraphics[width=0.48\textwidth]{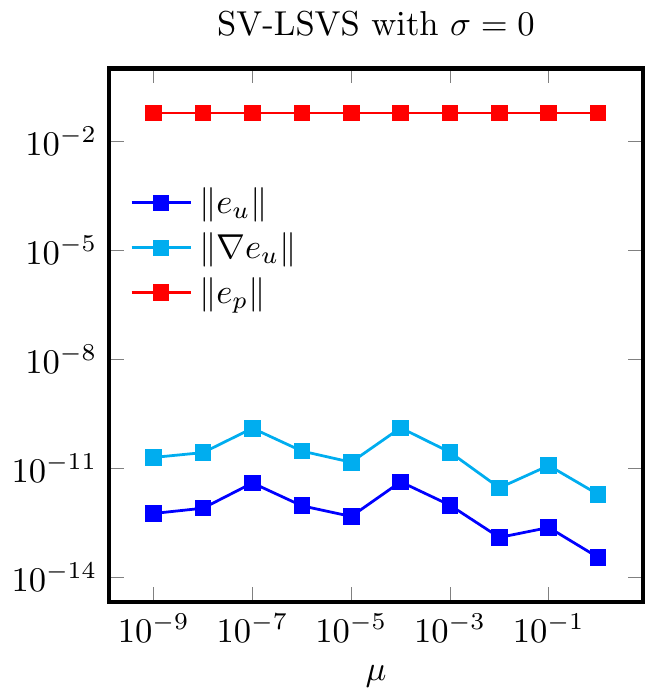}
	\includegraphics[width=0.48\textwidth]{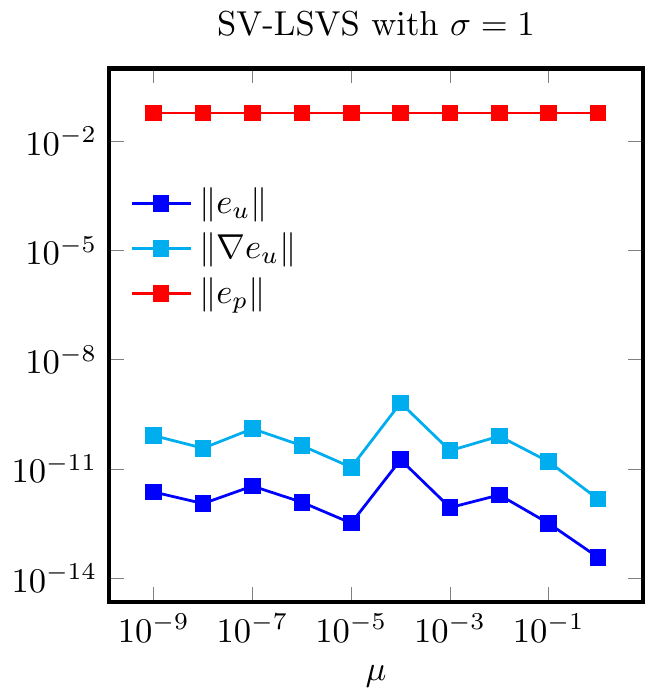}
	\caption{Example 1: error plots of different norms vs the viscosity parameter \(\mu\)
		for Scott-Vogelius finite element method with LSVS stabilization on refinement level 2 (\(\sigma=0\) left and \(\sigma=1\) right).} \label{fig:nu_exone_lsvs}
\end{figure}

\begin{figure}[!htb]
\centering
\includegraphics[width=0.48\textwidth]{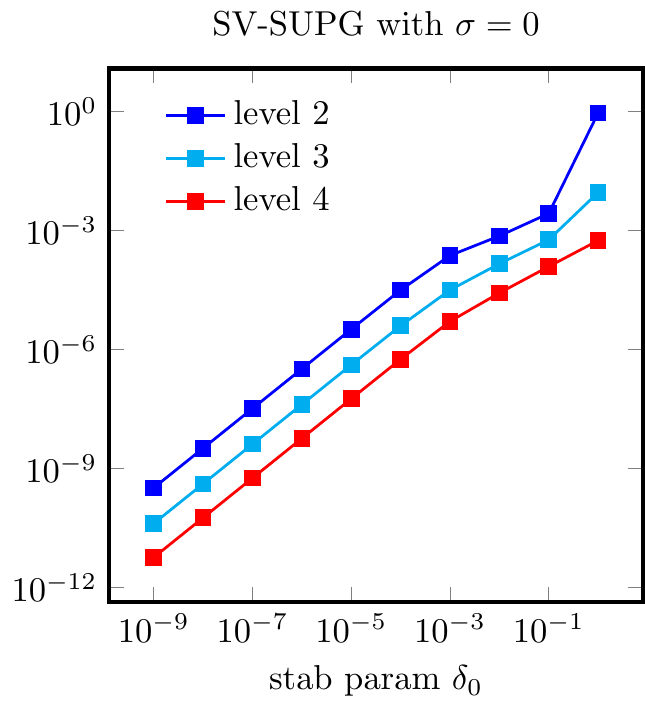}
\includegraphics[width=0.48\textwidth]{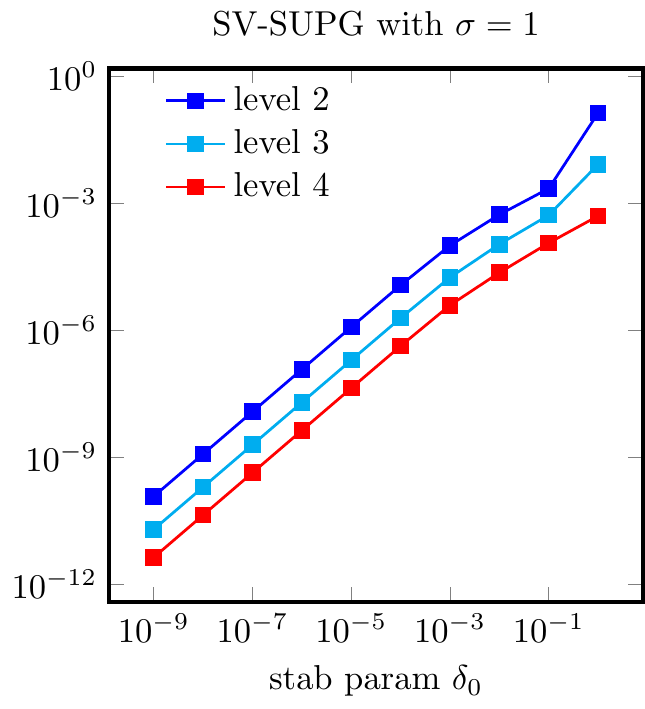}
 \caption{Example 1: $L^2$ velocity error for different stabilization parameters and different refinement levels for SV-SUPG (\(\sigma=0\)
    left and \(\sigma=1\) right) and fixed viscosity \(\mu=10^{-5}\).} \label{fig:param_exone_supg}
\end{figure}

The main observation is that both the plain SV method and the SV-LSVS method produce the exact velocity solution in this example, while the SV-SUPG method does not. Note, that this example is designed such that the exact solution belongs to the velocity ansatz space and any pressure-robust method therefore should be able to compute it exactly. Hence, this example demonstrates that SV-SUPG introduces some pressure-dependent error into the system that perturbs the discrete velocity solution. Moreover, at least in the parameter range $\mu \in [10^{-4},10^0]$ the velocity error scales with $\mu^{-1}$  which hints to a locking effect as observed for classical non-pressure-robust finite element methods in pressure-dominant situations. The effects can be explained by a closer look at the convection term. In this example $\sigma \bfu + (\bfbeta \cdot \nabla) \bfu$ completely balances the pressure gradient and therefore is a gradient itself.
A pressure-robust stabilisation does not need to stabilize gradient forces and therefore SV-LSVS (since any curl of a gradient vanishes) does not see this gradient and behaves identically to the plain SV method here
--- \textit{independent of the choice of the stabilization parameter.}
The SV-SUPG method on the other hand effectively sees and tries to stabilize the force \(\nabla_h (p - p_h)\) which does not vanish.

To round up the impression, Figure~\ref{fig:param_exone_supg} displays the $L^2$ velocity error of the SV-SUPG method on different mesh refinement levels and different choices of the SUPG stabilisation parameter \(\delta_0\). Usually, such a parameter plot leads to the conclusion that the optimal choice of \(\delta_0\) is around \(0.25\). This is not the case in this
extreme example. Here, the error scales approximately linearly with \(\delta_0\) and is optimal for \(\delta_0 = 0\), thus reinforcing the idea that the SUPG stabilization introduces a consistency error that affects the accuracy of the method.

\subsection{Example 2: Planar lattice flow}
In this example, we compare the accuracy of all methods considered in the previous example. This time the exact velocity is not in the velocity ansatz space. However, the convection term is still a gradient in the limit \(\bfu_h \rightarrow \bfu\). To this end, we fix \(\mu=10^{-5}\), \(\bfbeta = \bfu\) and boundary conditions are chosen 
such that 
\[
 \bfu = \big(\sin(2\pi x)\sin(2\pi y), \cos(2\pi x) \cos(2\pi y)\big), \qquad 
 p = \frac14 (\cos(4 \pi x) - \cos(4 \pi y))
\]
is the solution of the Oseen problem \eqref{eq:oseen} with \({\bf f} = \sigma \bfu - \mu \Delta \bfu\).
\begin{figure}[!htb]
	\centering
	\includegraphics[width=0.48\textwidth]{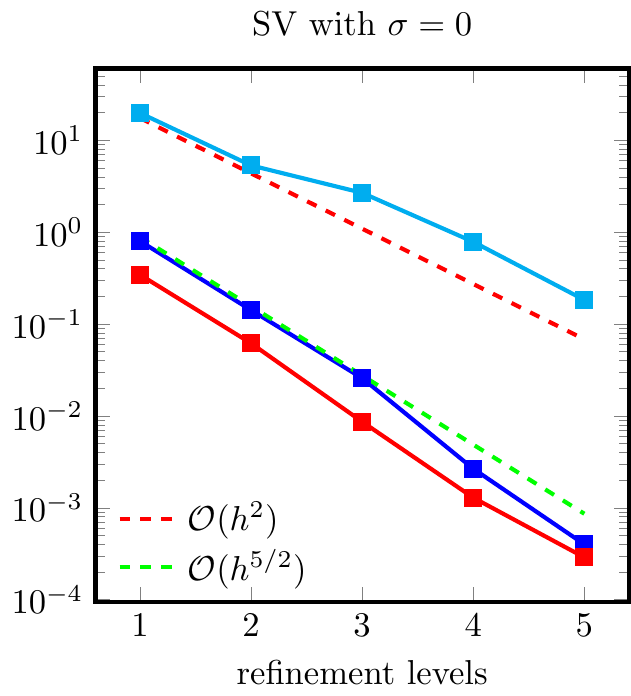}
	\includegraphics[width=0.48\textwidth]{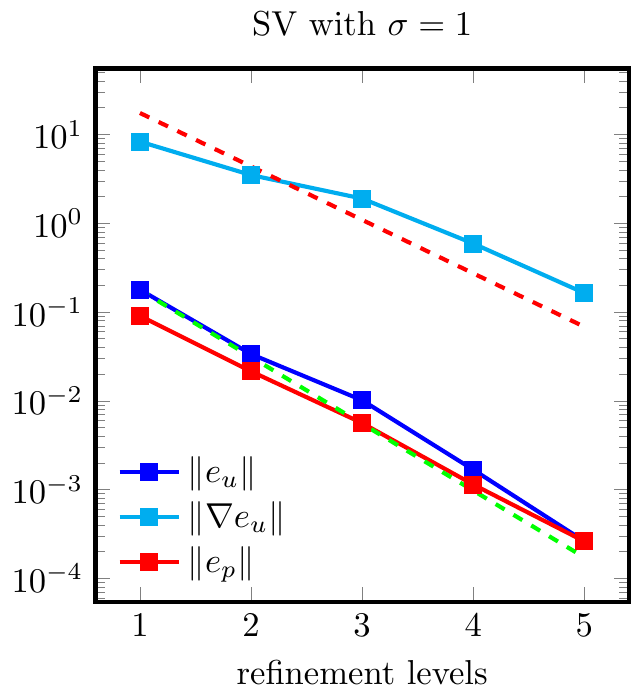}
	\caption{Example 2: error plots of different norms on different refinement levels 
	for Scott--Vogelius finite element methods (\(\sigma=0\)
	left and \(\sigma=1\) right) and fixed viscosity \(\mu=10^{-5}\).} \label{fig:nu_extwo_gal}
\end{figure}
\begin{figure}[!htb]
	\centering
	\includegraphics[width=0.48\textwidth]{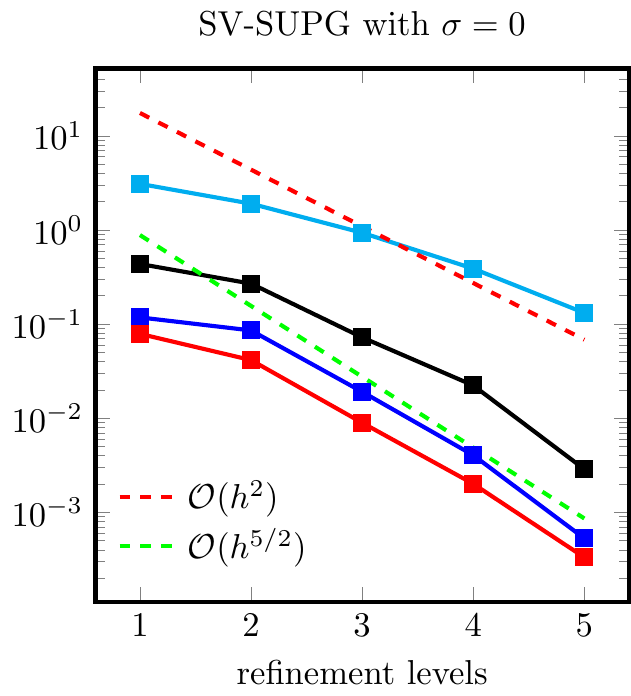}
	\includegraphics[width=0.48\textwidth]{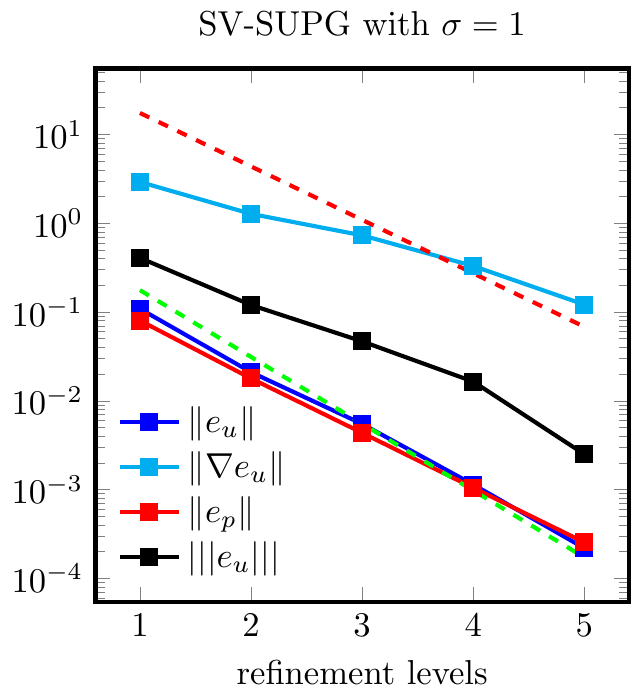}
	\caption{Example 2: error plots of different norms on different refinement levels
		for Scott--Vogelius with SUPG stabilization (\(\sigma=0\)
		 left and \(\sigma=1\) right) and fixed viscosity \(\mu=10^{-5}\).} \label{fig:nu_extwo_supg}
\end{figure}
\begin{figure}[!htb]
	\centering
	\includegraphics[width=0.48\textwidth]{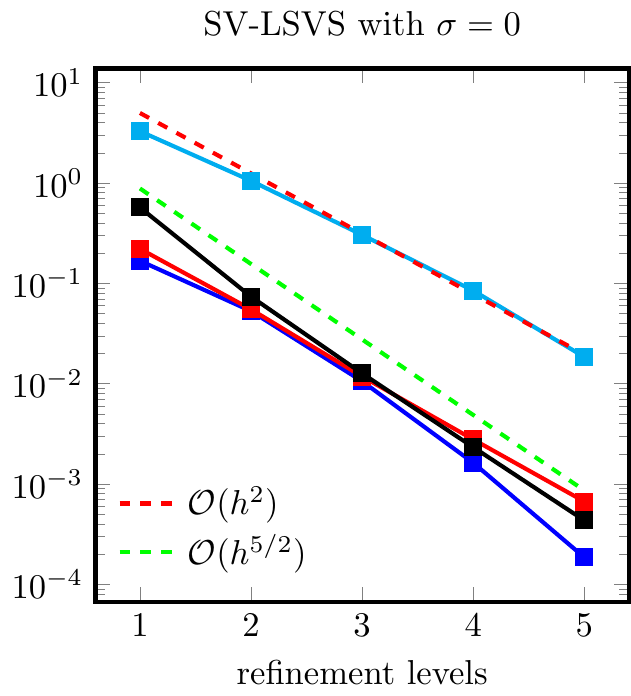}
	\includegraphics[width=0.48\textwidth]{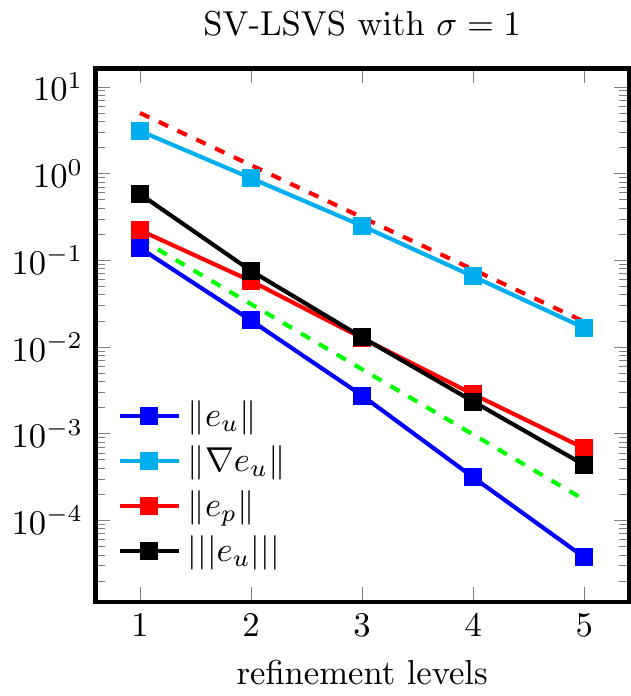}
	\caption{Example 2: error plots of different norms on different refinement levels
		for Scott--Vogelius with LSVS stabilization (\(\sigma=0\)
		left and \(\sigma=1\) right) and fixed viscosity \(\mu=10^{-5}\).} \label{fig:nu_extwo_conv}
\end{figure}

Figures~\ref{fig:nu_extwo_gal}-\ref{fig:nu_extwo_conv} display the convergence history of all three methods under consideration. The plain SV method does not convergence optimally, at least pre-asymptotically for \(\sigma = 1\)
(average EOC=$2.35$). Also the SV-SUPG method shows suboptimal behavior for \(\sigma = 1\) (average EOC=$2.24$) and for \(\sigma = 0\) (average EOC=$1.95)$. SV-SUPG is not really much more accurate than
the plain SV method on finer meshes, while it stabilizes
the solution on coarser meshes. Also, for other choices of the SUPG stabilisation parameter \(\delta_0\), see Figure~\ref{fig:param_extwo_supg}, the situation does not improve much, although the optimum on coarse meshes seems to be slightly shifted  toward smaller values.

\begin{figure}[!htb]
\centering
\includegraphics[width=0.48\textwidth]{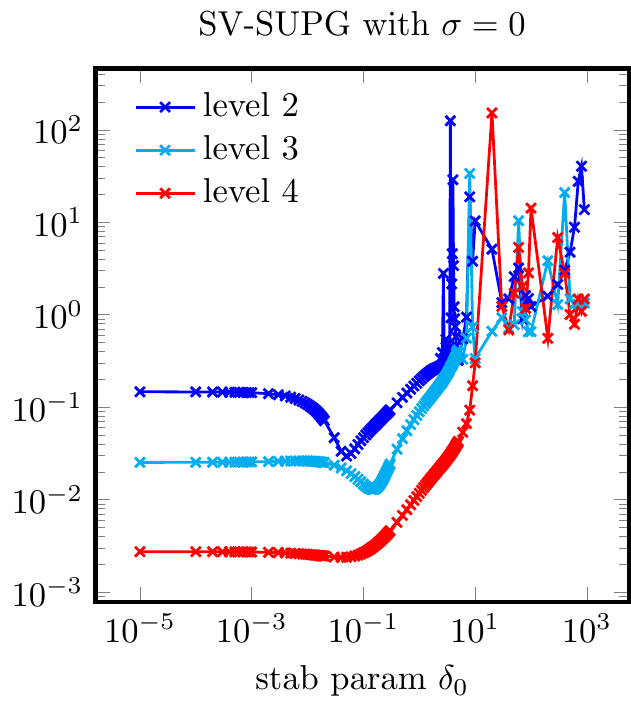}
\includegraphics[width=0.48\textwidth]{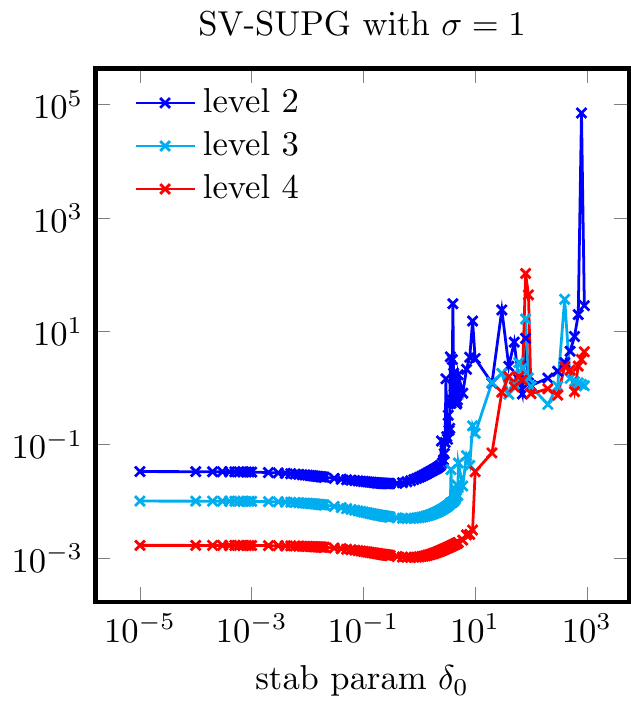}
 \caption{Example 2: $L^2$ velocity error for different stabilization parameters and different refinement levels for SV-SUPG (\(\sigma=0\)
    left and \(\sigma=1\) right) and fixed viscosity \(\mu=10^{-5}\).} \label{fig:param_extwo_supg}
\end{figure}

\begin{figure}[!htb]
\centering
\includegraphics[width=0.48\textwidth]{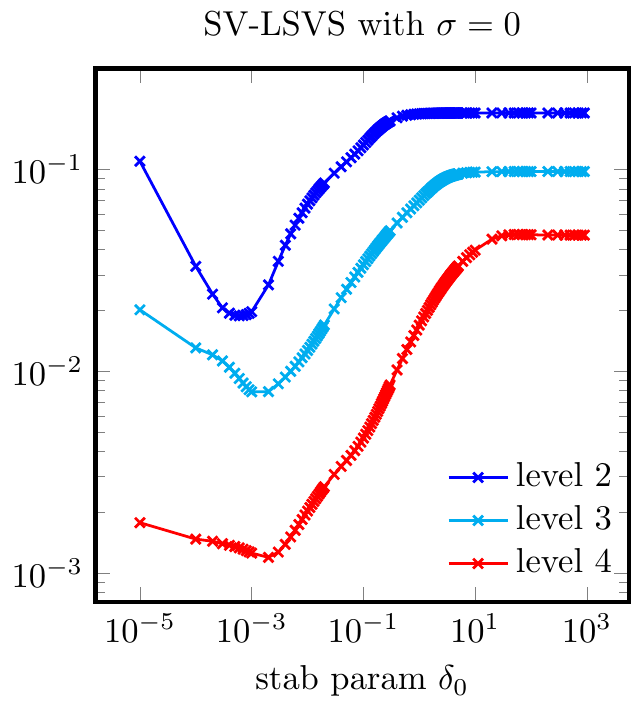}
\includegraphics[width=0.48\textwidth]{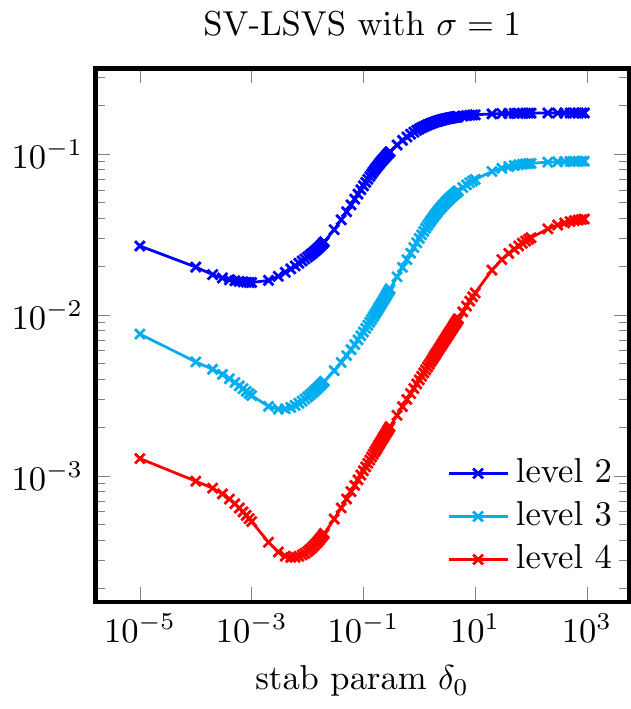}
 \caption{Example 2: $L^2$ velocity error for different stabilization parameters and different refinement levels for SV-LSVS (\(\sigma=0\)
    left and \(\sigma=1\) right) and fixed viscosity \(\mu=10^{-5}\).} \label{fig:param_extwo_lsvs}
\end{figure}

The SV-LSVS method on the other hand shows optimal convergence rates
for \(\sigma = 1\) (average EOC=$2.96)$ and delivers much smaller velocity errors on the finest mesh than the other two methods,
compare also the numbers in Tables~\ref{ex2_sigma0} and \ref{ex2_sigma1} for \(\sigma = 0\) and \(\sigma = 1\), respectively. 
Figure~\ref{fig:param_extwo_lsvs} shows a similar parameter study for SV-LSVS. 
One can see for both \(\sigma=0\) and \(\sigma=1\) that the optimal value lies between the interval \(10^{-2}\) to \(10^{-3}\).

\begin{table}[htb]\caption{Example 2: velocity and pressure errors for all methods and different refinement levels for \(\sigma =0\).}\label{ex2_sigma0}
	\begin{center}
		\begin{tabular}{l|lll|lll|lll|lll}
			ref & \multicolumn{3}{c|}{SV} & \multicolumn{3}{c|}{SV-SUPG} &
			\multicolumn{3}{c}{SV-LSVS}\\ 
			&$L^2(u)$& $H^1(u)$& $L^2(p)$&$L^2(u)$& $H^1(u)$ & $L^2(p)$ &$L^2(u)$ & $H^1(u)$ & $L^2(p)$ \\ \hline
			1&8.020e-1&19.86   &3.448e-1&1.179e-1&3.090   &7.888e-2& 1.681e-1&3.2900   & 2.213e-1\\
			2&1.420e-1&5.335   &6.186e-2&8.578e-2&1.903   &4.152e-2& 5.295e-2&1.0544   & 5.514e-2\\
			3&2.582e-2&2.682   &8.659e-3&1.911e-2&9.348e-1&8.968e-3& 1.058e-2&3.045e-1 & 1.180e-2\\
			4&2.668e-3&7.860e-1&1.291e-3&4.056e-3&3.888e-1&2.012e-3& 1.629e-3&8.472e-2 & 2.784e-3\\
			5&4.007e-4&1.832e-1&2.891e-4&5.303e-4&1.316e-1&3.333e-4& 1.858e-4&1.848e-2 & 6.697e-4\\
			\hline
			EOC & 2.74 & 1.69 & 2.55 & 1.95 & 1.14 & 1.97 & 2.46 & 1.87 & 2.09 \\
		\end{tabular}
	\end{center}
\end{table}

\begin{table}[htb]\caption{Example 2: velocity and pressure errors for all methods and different refinement levels for \(\sigma =1\).}\label{ex2_sigma1}
	\begin{center}
		\begin{tabular}{l|lll|lll|lll|lll}
			ref & \multicolumn{3}{c|}{SV} & \multicolumn{3}{c|}{SV-SUPG} &
			\multicolumn{3}{c}{SV-LSVS}\\ 
			&$L^2(u)$& $H^1(u)$& $L^2(p)$&$L^2(u)$& $H^1(u)$ & $L^2(p)$ &$L^2(u)$ & $H^1(u)$ & $L^2(p)$ \\ \hline
			1&1.790e-1&8.326   &9.088e-2&1.090e-1&2.923   &8.038e-2& 1.387e-1&3.1052  &2.222e-1\\
			2&3.367e-2&3.497   &2.152e-2&2.105e-2&1.277   &1.790e-2& 2.022e-2&8.847e-1&5.771e-2\\
			3&1.015e-2&1.900   &5.619e-3&5.501e-3&7.322e-1&4.364e-3& 2.751e-3&2.496e-1&1.264e-2\\
			4&1.679e-3&5.918e-1&1.142e-3&1.141e-3&3.306e-1&1.048e-3& 3.133e-4&6.505e-2&2.846e-3\\
			5&2.623e-4&1.638e-1&2.616e-4&2.194e-4&1.215e-1&2.550e-4& 3.741e-5&1.658e-2&6.775e-4\\
			\hline
			EOC & 2.35 & 1.42 & 2.11 & 2.24 & 1.15 & 2.08 & 2.96 & 1.89 & 2.09 \\
		\end{tabular}
	\end{center}
\end{table}

\subsection{Example 3: modified Planar lattice flow}
The third example takes the flow \(\bfu\) of Example 2 and modifies the right-hand side forcing such that \(\bfbeta = (0,1)^T\) and \(p = 0\).
 Note that this time \((\bfbeta \cdot \nabla) \bfu\) is a divergence-free field. Therefore, it is expected that
this example defines the best-case scenario for the SV-SUPG method due to $p=0$. In fact, this is the case, as SV-SUPG does improve the results
given by the plain Galerkin method, but still SV-LSVS provide a more accurate solution.

\begin{figure}[!htb]
	\centering
	\includegraphics[width=0.48\textwidth]{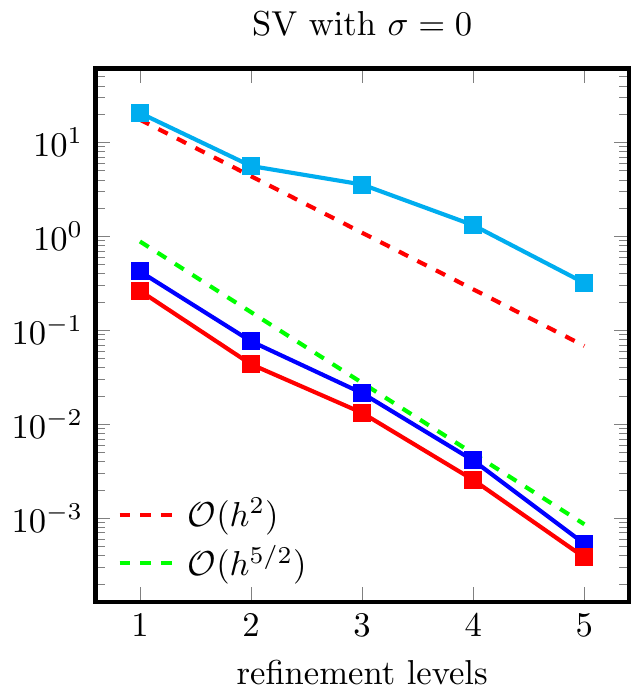}
	\includegraphics[width=0.48\textwidth]{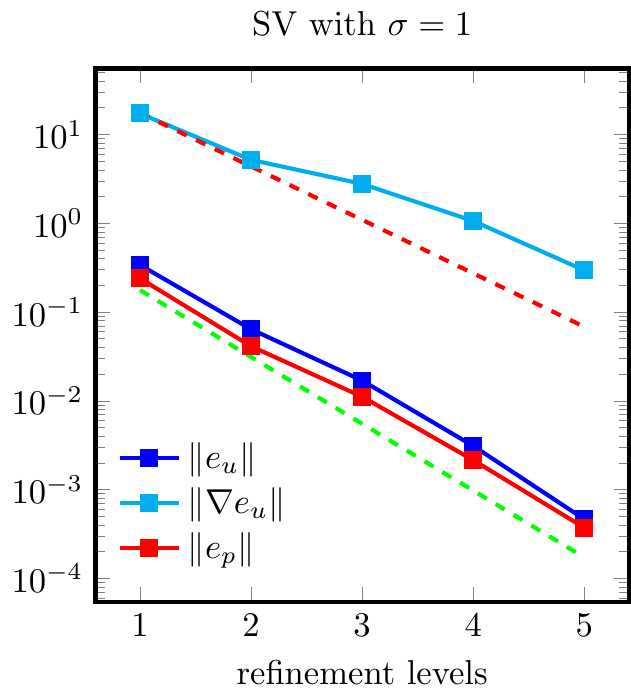}
	\caption{Example 3: error plots of different norms on different refinement levels 
	for Scott-Vogelius finite element methods (\(\sigma=0\)
	left and \(\sigma=1\) right) and fixed viscosity \(\mu=10^{-5}\).} \label{fig:nu_exthree_gal}
\end{figure}
\begin{figure}[!htb]
	\centering
	\includegraphics[width=0.48\textwidth]{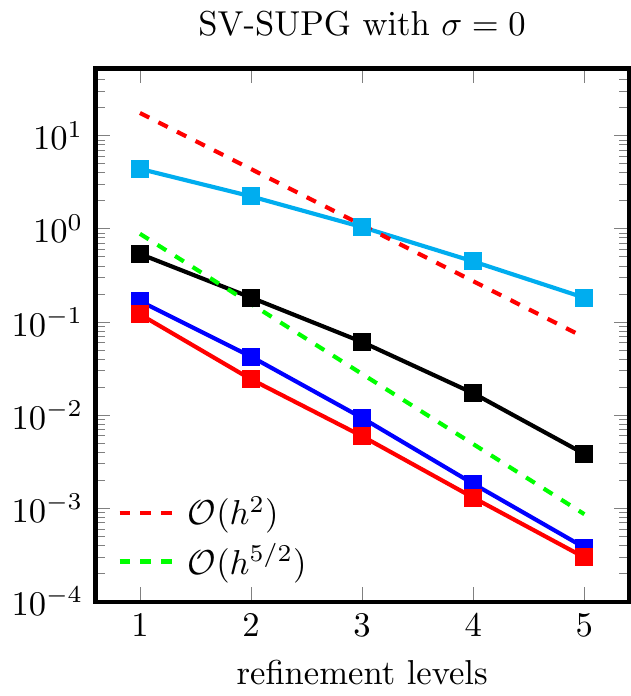}
	\includegraphics[width=0.48\textwidth]{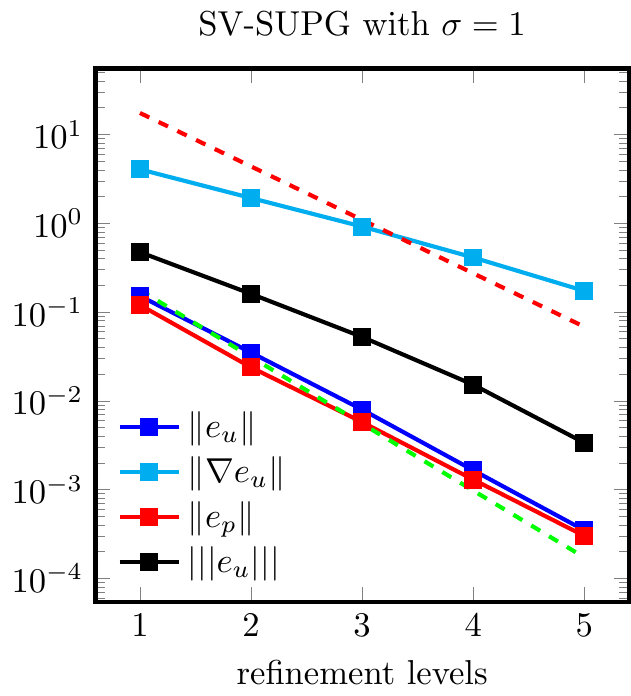}
	\caption{Example 3: error plots of different norms on different refinement levels
	for Scott-Vogelius with SUPG stabilization (\(\sigma=0\)
	left and \(\sigma=1\) right) and fixed viscosity \(\mu=10^{-5}\).} \label{fig:nu_exthree_supg}
\end{figure}
\begin{figure}[!htb]
\centering
\includegraphics[width=0.48\textwidth]{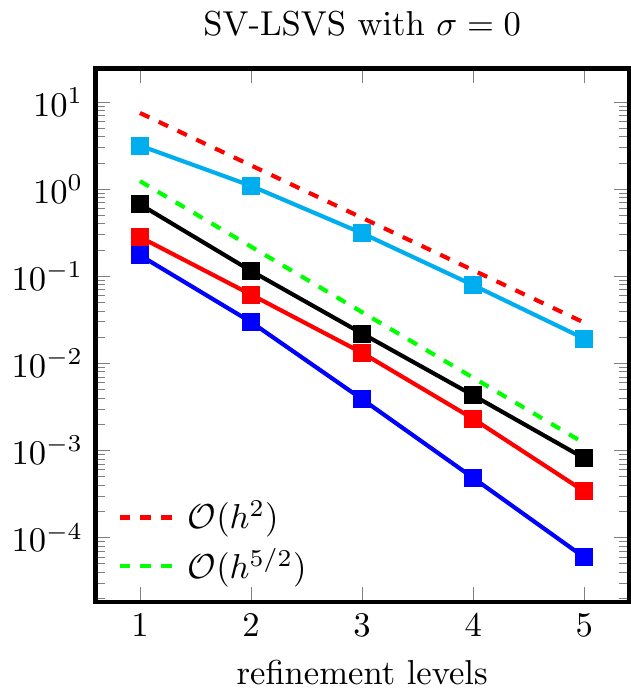}
\includegraphics[width=0.48\textwidth]{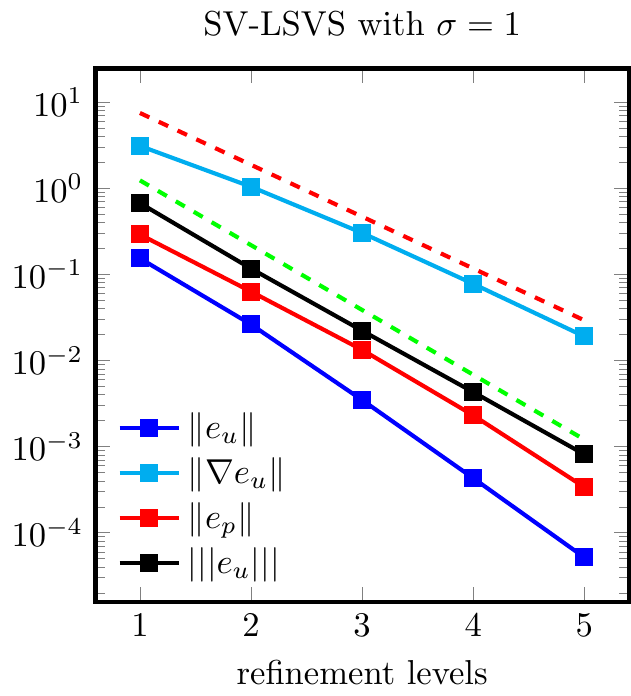}
 \caption{Example 3: error plots of different norms on different refinement levels
    for Scott-Vogelius with LSVS stabilization (\(\sigma=0\)
    left and \(\sigma=1\) right) and fixed viscosity \(\mu=10^{-5}\).} \label{fig:nu_exthree_conv}
\end{figure}

\begin{figure}[!htb]
\centering
\includegraphics[width=0.48\textwidth]{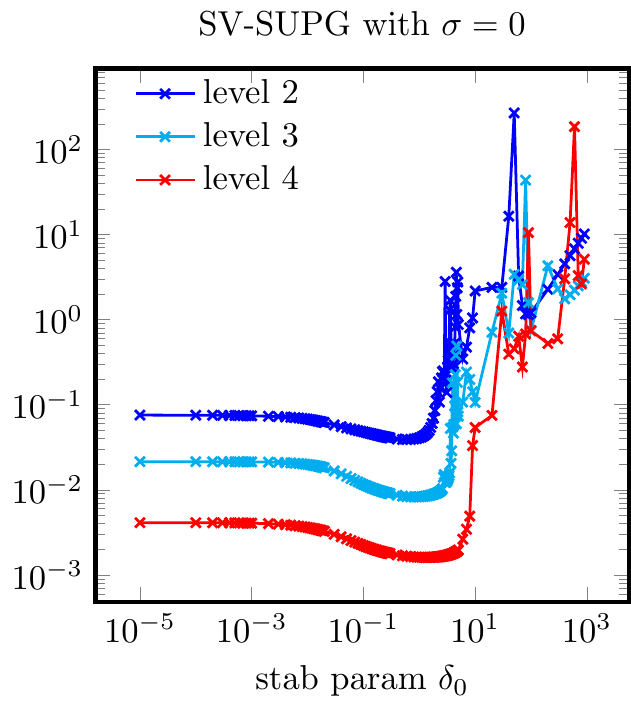}
\includegraphics[width=0.48\textwidth]{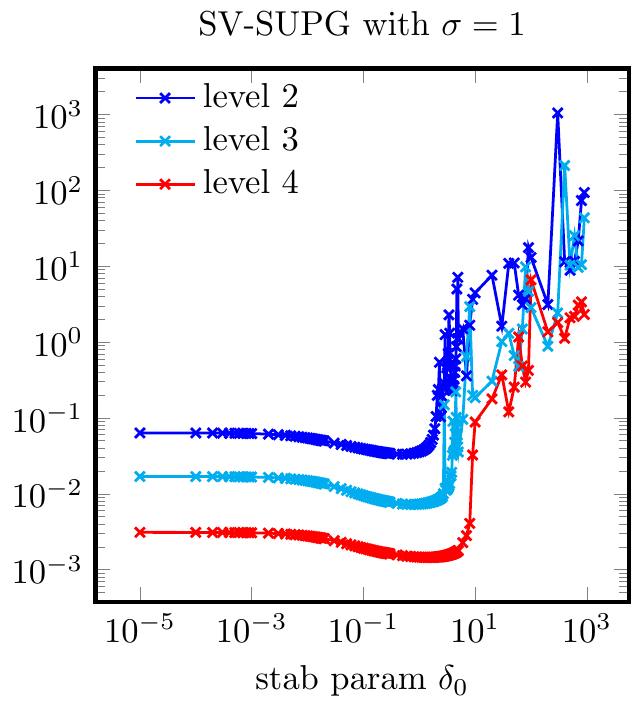}
 \caption{Example 3: $L^2$ velocity error for different stabilization parameters and different refinement levels for SV-SUPG (\(\sigma=0\)
    left and \(\sigma=1\) right) and fixed viscosity \(\mu=10^{-5}\).} \label{fig:param_ex3_supg}
\end{figure}

\begin{figure}[!htb]
\centering
\includegraphics[width=0.48\textwidth]{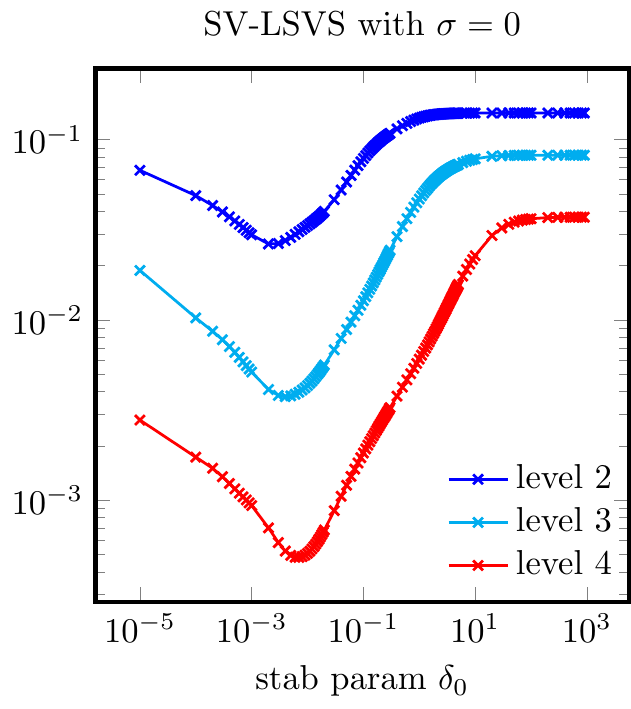}
\includegraphics[width=0.48\textwidth]{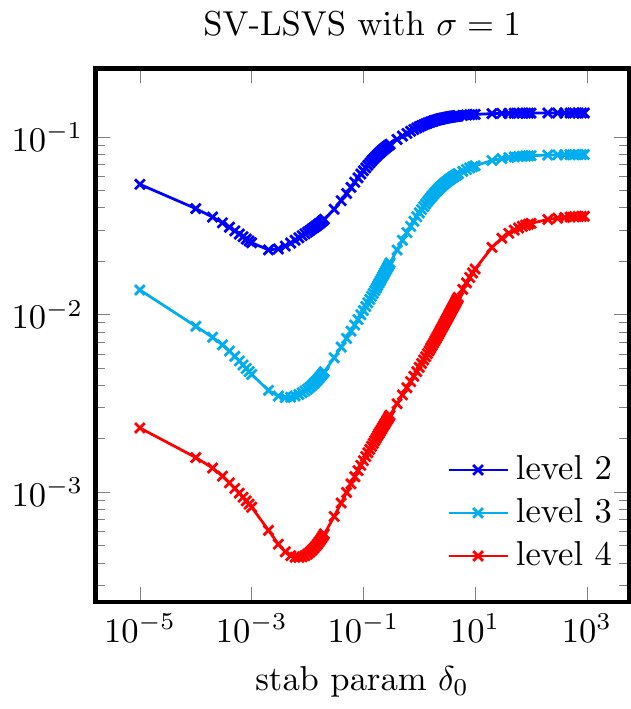}
 \caption{Example 3: $L^2$ velocity error for different stabilization parameters and different refinement levels for SV-LSVS (\(\sigma=0\)
    left and \(\sigma=1\) right) and fixed viscosity \(\mu=10^{-5}\).}  \label{fig:param_ex3_lsvs}
\end{figure}

\begin{table}[!h]\caption{Example 3: velocity and pressure errors for all methods and different refinement levels for \(\sigma = 0\).}\label{ex3_sigma0}
	\begin{center}
		\begin{tabular}{l|lll|lll|lll|lll}
			ref & \multicolumn{3}{c|}{SV} & \multicolumn{3}{c|}{SV-SUPG} &
			\multicolumn{3}{c}{SV-LSVS}\\ 
			&$L^2(u)$&$H^1(u)$& $L^2(p)$&$L^2(u)$& $H^1(u)$ & $L^2(p)$ &$L^2(u)$ & $H^1(u)$ & $L^2(p)$ \\ \hline
			1&4.237e-1&20.605   &2.640e-1&1.672e-1&4.398   &1.207e-1&1.742e-1&3.1664  &2.823e-1\\
			2&7.657e-2&5.6154   &4.357e-2&4.248e-2&2.228   &2.422e-2&2.982e-2&1.0913  &6.163e-2\\
			3&2.146e-2&3.5678   &1.323e-2&9.326e-3&1.041   &5.938e-3&3.875e-3&3.119e-1&1.320e-2\\
			4&4.124e-3&1.3164   &2.561e-3&1.832e-3&4.462e-1&1.300e-3&4.836e-4&7.899e-2&2.307e-3\\
			5&5.356e-4&3.1968e-1&3.835e-4&3.793e-4&1.818e-1&2.969e-4&5.916e-5&1.918e-2&3.389e-4\\
			\hline
			EOC & 2.41 & 1.50 & 2.36 & 2.20 & 1.15 & 2.17 & 2.88 & 1.84 & 2.43 \\
			\end{tabular}
	\end{center}
\end{table}
\begin{table}[!h]\caption{Example 3: velocity and pressure errors for all methods and different refinement levels for \(\sigma = 1\).}\label{ex3_sigma1}
	\begin{center}
		\begin{tabular}{l|lll|lll|lll|lll}
			ref & \multicolumn{3}{c|}{SV} & \multicolumn{3}{c|}{SV-SUPG} &
			\multicolumn{3}{c}{SV-LSVS}\\ 
			&$L^2(u)$&$H^1(u)$& $L^2(p)$&$L^2(u)$& $H^1(u)$ & $L^2(p)$ &$L^2(u)$ & $H^1(u)$ & $L^2(p)$ \\ \hline
			1&3.397e-1&17.27   &2.402e-1&1.518e-1&4.056   &1.203e-1&1.536e-1&3.1088  &2.911e-1\\
			2&6.418e-2&5.188   &4.125e-2&3.504e-2&1.927   &2.386e-2&2.626e-2&1.0425  &6.320e-2\\
			3&1.694e-2&2.781   &1.115e-2&7.981e-3&9.191e-1&5.768e-3&3.483-e3&3.033e-1&1.331e-2\\
			4&3.107e-3&1.062   &2.152e-3&1.654e-3&4.122e-1&1.298e-3&4.308e-4&7.772e-2&2.310e-3\\
			5&4.646e-4&2.952e-1&3.725e-4&3.490e-4&1.732e-1&3.014e-4&5.178e-5&1.905e-2&3.390e-4\\
		    \hline
			EOC & 2.38 & 1.47 & 2.33 & 2.19 & 1.14 & 2.16 & 2.88 & 1.84 & 2.44 \\
		\end{tabular}
	\end{center}
\end{table}
Tables~\ref{ex3_sigma0} and \ref{ex3_sigma1} confirm this expectation that the SV-SUPG method works as well as the SV-LSVS method. One can see that the SV-SUPG method converges optimally. However, the SV-LSVS method
delivers a
slightly better velocity than the SV-SUPG method (a factor 6 smaller on the finest mesh). Figure~\ref{fig:param_ex3_supg} confirms that SV-SUPG method works close to its optimum with the default parameter \(\delta_0 = 0.25\). 
 Figure~\ref{fig:param_ex3_lsvs} for the SV-LSVS method on the other hand shows that  \(\delta_0 = 0.006\) is a good estimate for  optimal parameter value.
 
\subsection{Example 4: 'superposition' of Example 2 and 3}

The last example combines the flows of Examples 2 and 3 and employs a superposition of their convective forces. This is, the convective term is given by \(\bfbeta := \bfu + (0,1)^T\), while \(\bfu\) and \(p\) are the same as in Example 2. This is somehow considered to be a 'realistic' situation where the (discrete and asymptotic) convective forcing has an irrotational part (as in Examples 1 and 2) and a divergence-free part (as in Example 3).

\begin{figure}[!htb]
	\centering
	\includegraphics[width=0.48\textwidth]{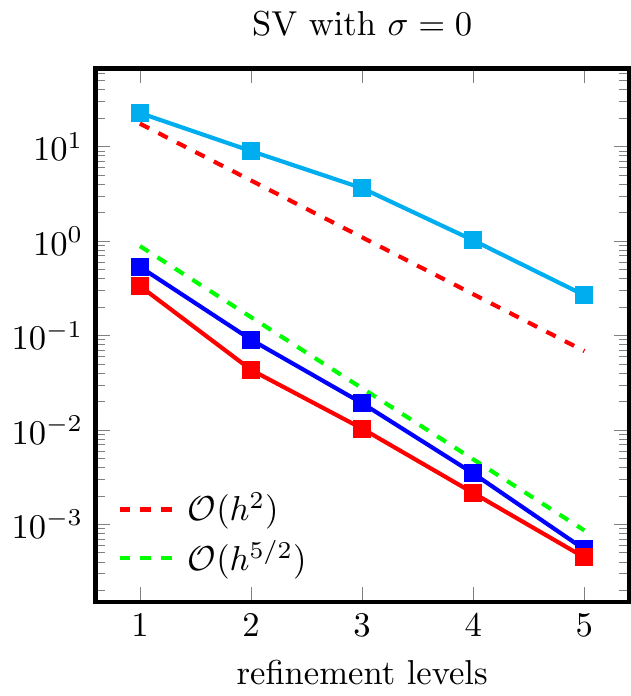}
	\includegraphics[width=0.48\textwidth]{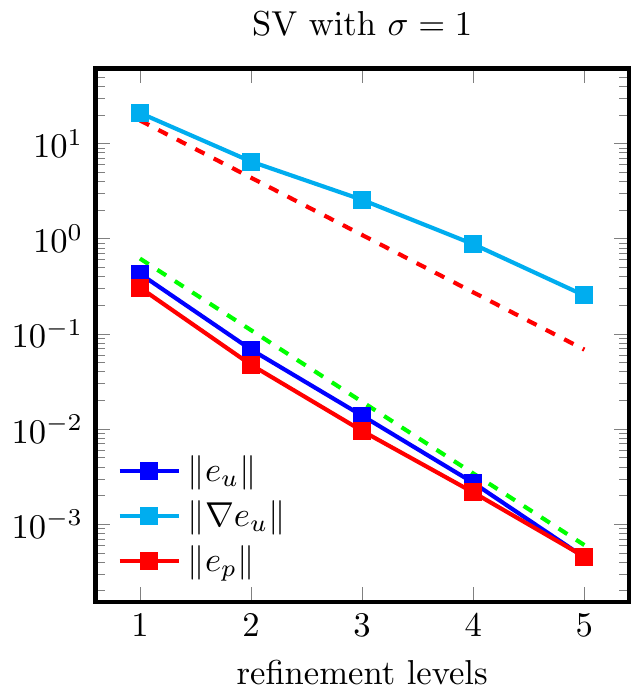}
	\caption{Example 4: error plots of different norms on different refinement levels 
	for Scott-Vogelius finite element methods (\(\sigma=0\)
	left and \(\sigma=1\) right) and fixed viscosity \(\mu=10^{-5}\).} \label{fig:nu_exfour_gal}
\end{figure}

\begin{figure}[!htb]
	\centering
	\includegraphics[width=0.48\textwidth]{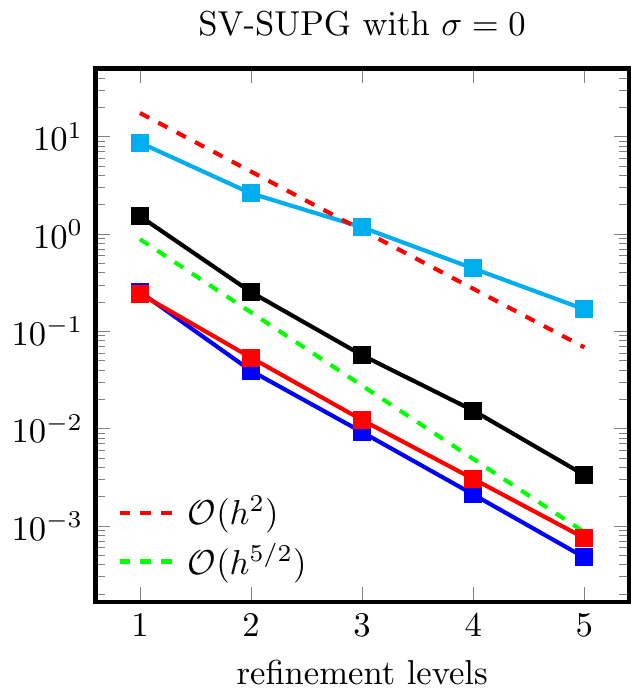}
	\includegraphics[width=0.48\textwidth]{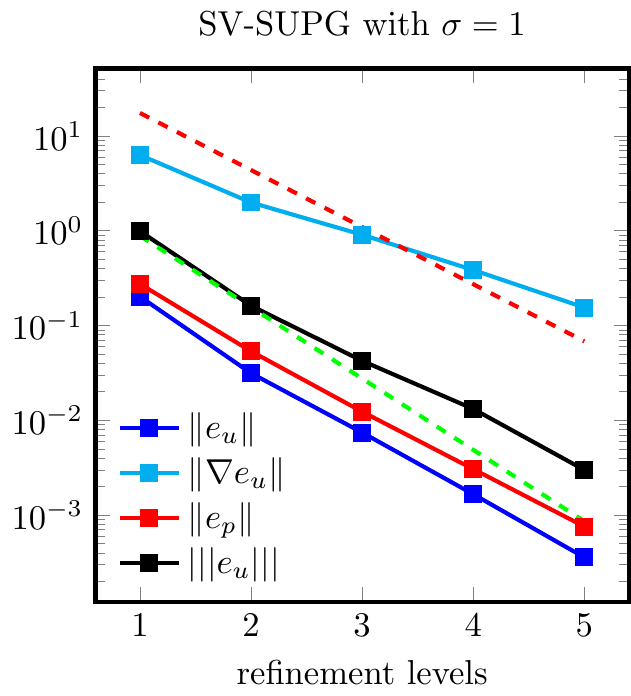}
	\caption{Example 4: error plots of different norms on different refinement levels
	for Scott-Vogelius with SUPG stabilization (\(\sigma=0\)
	left and \(\sigma=1\) right) and fixed viscosity \(\mu=10^{-5}\).} \label{fig:nu_exfour_supg}
\end{figure}
\begin{figure}[!htb]
\centering
\includegraphics[width=0.48\textwidth]{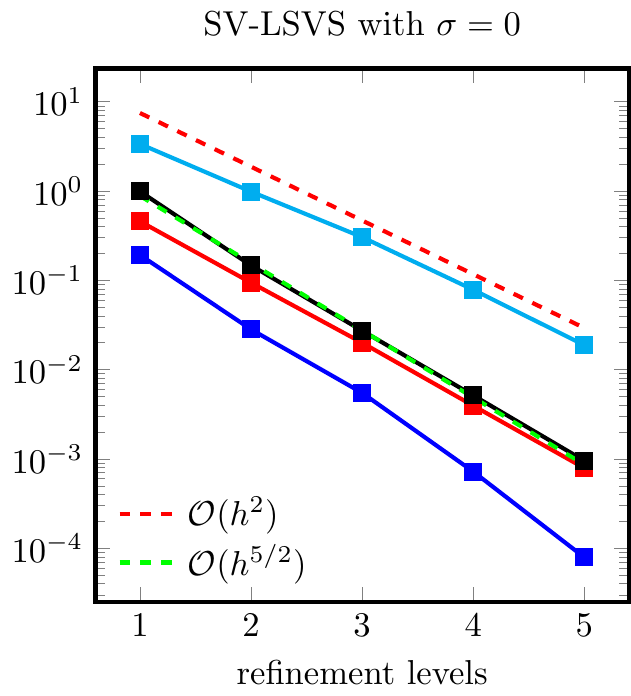}
\includegraphics[width=0.48\textwidth]{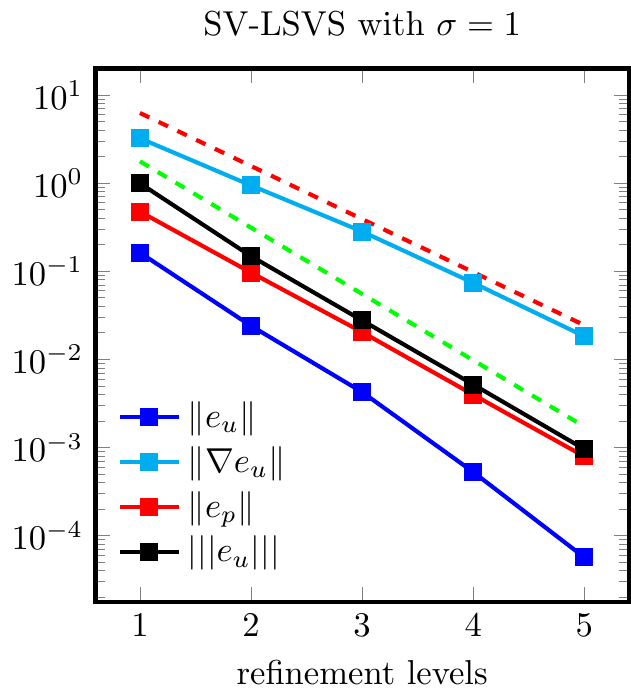}
 \caption{Example 4: error plots of different norms on different refinement levels
    for Scott-Vogelius with convection stabilization (\(\sigma=0\)
    left and \(\sigma=1\) right) and fixed viscosity \(\mu=10^{-5}\).} \label{fig:nu_exfour_lsvs}
\end{figure}

\begin{figure}[!htb]
\centering
\includegraphics[width=0.48\textwidth]{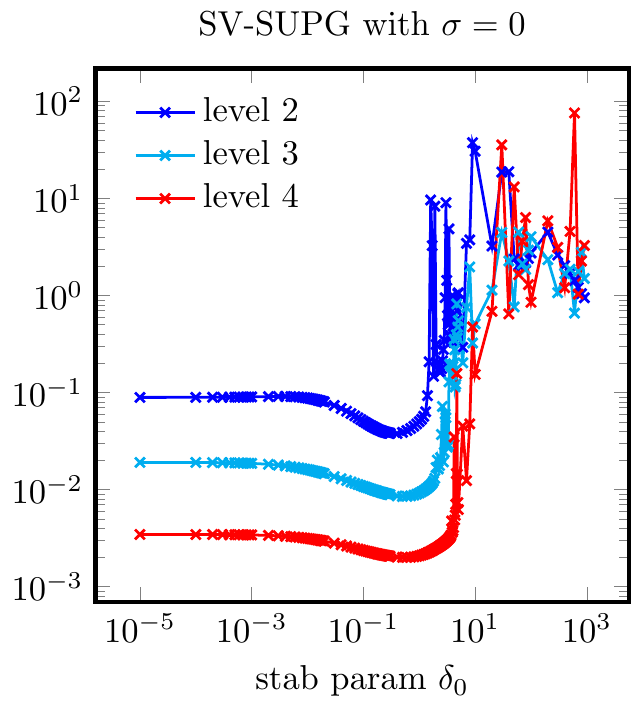}
\includegraphics[width=0.48\textwidth]{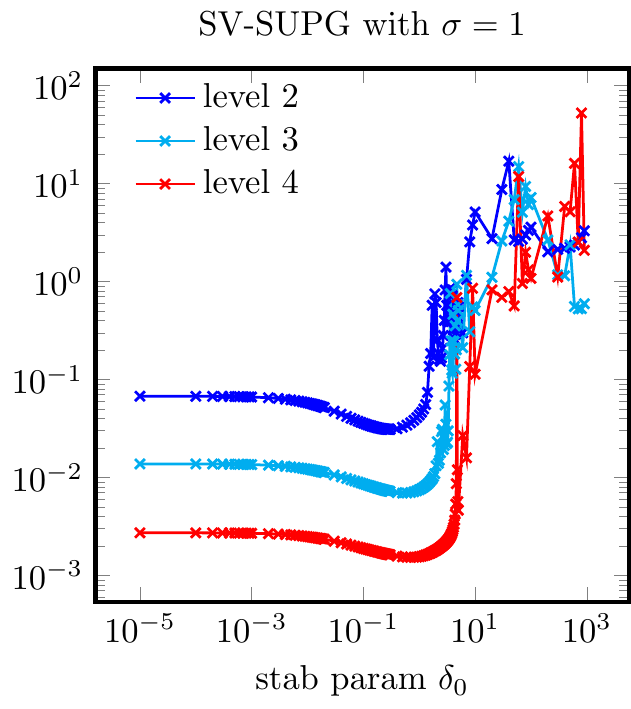}
 \caption{Example 4: $L^2$ velocity error for different stabilization parameters and different refinement levels for SV-SUPG (\(\sigma=0\)
    left and \(\sigma=1\) right) and fixed viscosity \(\mu=10^{-5}\).} \label{fig:param_ex4_supg}
\end{figure}

\begin{figure}[!htb]
\centering
\includegraphics[width=0.48\textwidth]{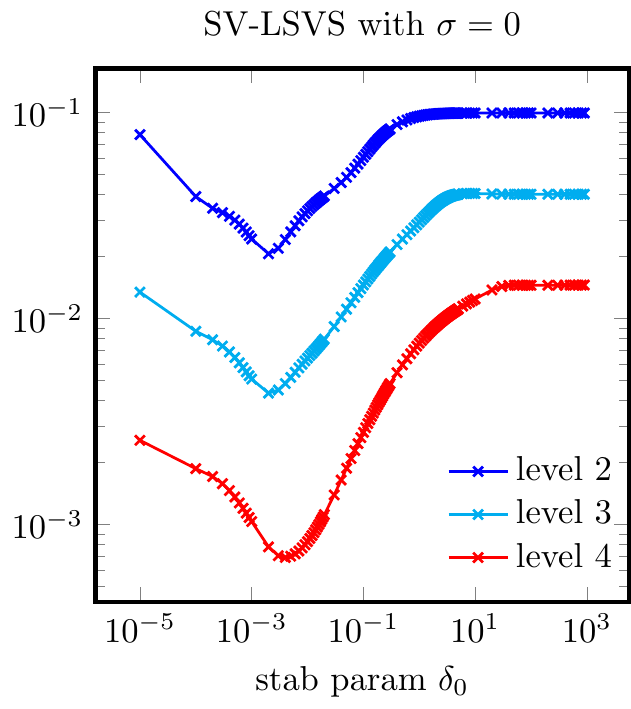}
\includegraphics[width=0.48\textwidth]{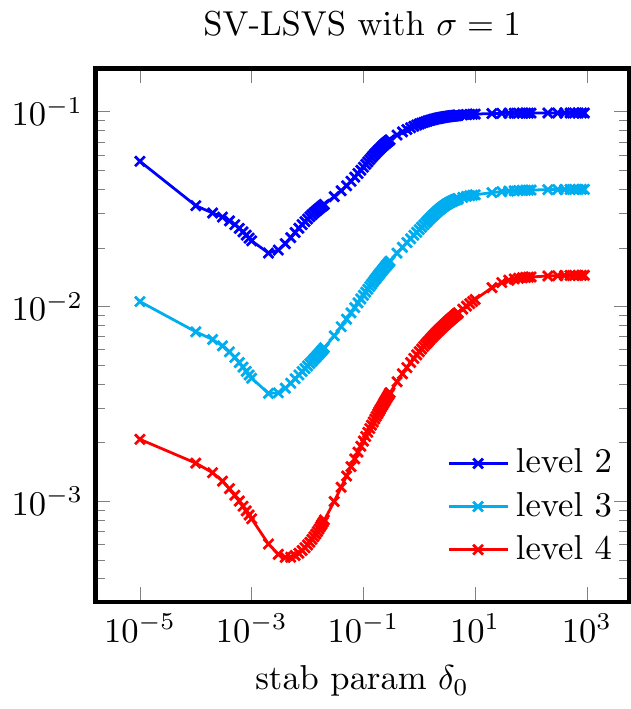}
 \caption{Example 4: $L^2$ velocity error for different stabilization parameters and different refinement levels for SV-LSVS (\(\sigma=0\)
    left and \(\sigma=1\) right) and fixed viscosity \(\mu=10^{-5}\).
    }  \label{fig:param_ex4_lsvs}
\end{figure}
\begin{table}[!h]\caption{Example 4: velocity and pressure errors for all methods and different refinement levels for \(\sigma = 0\).}\label{ex4_sigma0}
	\begin{center}
		\begin{tabular}{l|lll|lll|lll|lll}
			ref & \multicolumn{3}{c|}{SV} & \multicolumn{3}{c|}{SV-SUPG} &
			\multicolumn{3}{c}{SV-LSVS}\\ 
			&$L^2(u)$&$H^1(u)$& $L^2(p)$&$L^2(u)$& $H^1(u)$ & $L^2(p)$ &$L^2(u)$ & $H^1(u)$ & $L^2(p)$ \\ \hline
			1&5.328e-1&2.269e+1&3.339e-1&2.540e-1&8.6706  &2.434e-1& 1.929e-1&3.3734  &4.587e-1\\
			2&9.032e-2&8.969e+0&4.330e-2&3.928e-2&2.6116  &5.363e-2& 2.826e-2&9.865e-1&9.425e-2\\
			3&1.919e-2&3.627e+0&1.033e-2&9.198e-3&1.1702  &1.224e-2& 5.499e-3&3.050e-1&2.000e-2\\
			4&3.467e-3&1.016e+0&2.150e-3&2.107e-3&4.421e-1&3.043e-3& 7.221e-4&7.851e-2&3.949e-3\\
			5&5.443e-4&2.668e-1&4.473e-4&4.752e-4&1.680e-1&7.519e-4& 7.904e-5&1.882e-2&7.901e-4\\
			\hline
			EOC & 2.48 & 1.60 & 2.39 & 2.27 & 1.42 & 2.08 & 2.81 & 1.87 & 2.30 \\
		\end{tabular}
	\end{center}
\end{table}
\begin{table}[!h]\caption{Example 4: velocity and pressure errors for all methods and different refinement levels for \(\sigma = 1\).}\label{ex4_sigma1}
	\begin{center}
		\begin{tabular}{l|lll|lll|lll|lll}
			ref & \multicolumn{3}{c|}{SV} & \multicolumn{3}{c|}{SV-SUPG} &
			\multicolumn{3}{c}{SV-LSVS}\\ 
			&$L^2(u)$&$H^1(u)$& $L^2(p)$&$L^2(u)$& $H^1(u)$ & $L^2(p)$ &$L^2(u)$ & $H^1(u)$ & $L^2(p)$ \\ \hline
			1&4.284e-1&2.093e+1&3.066e-1&2.022e-1&6.3029   &2.751e-1&1.624e-1&3.2283  &4.747-1\\
			2&6.847e-2&6.468e+0&4.734e-2&3.146e-2&1.9888   &5.358e-2&2.399e-2&9.408e-1&9.650-2\\
			3&1.382e-2&2.562e+0&9.569e-3&7.439e-3&9.080e-1 &1.234e-2&4.255e-3&2.822e-1&2.036-2\\
			4&2.729e-3&8.759e-1&2.161e-3&1.663e-3&3.854e-1 &3.062e-3&5.264e-4&7.382e-2&3.970-3\\
			5&4.487e-4&2.533e-1&4.561e-4&3.585e-4&1.550e-1 &7.579e-4&5.662e-5&1.826e-2&7.907-4\\
						\hline
			EOC & 2.47 & 1.59 & 2.35 & 2.28 & 1.34 & 2.13 & 2.87 & 1.87 & 2.31 \\
		\end{tabular}
	\end{center}
\end{table}

As expected from the experience with the other examples, both stabilization methods significantly improve the errors compared to the plain SV method. There is also a clear improvement of SV-LSVS compared to SV-SUPG.
Only SV-LSVS has an optimal convergence behavior,
compare Figures~\ref{fig:nu_exfour_gal}-\ref{fig:nu_exfour_lsvs}
and Tables~\ref{ex4_sigma0} and \ref{ex4_sigma1}.

\section{Concluding remarks} \label{concl}

In this work a new stabilized finite element method for the Oseen has been proposed and analyzed. The method
is based on the observation that, in order to obtain pressure-robust error estimates, the stabilization term needs
to be independent of the pressure. That is why the stabilizing term is built as a penalization of the vorticity equation,
where the pressure gradient is not present. This design has allowed us to prove optimal, pressure-independent error
estimates for the velocity. In particular, the $O(h^{k+\frac{1}{2}})$ error bound for $\|\bfu-\bfu_h^{}\|_{0,\Omega}^{}$, not
available for the Galerkin method or the SUPG method when applied to inf-sup stable discretizations, and also only available 
so far for $H^1$-conforming equal order stabilized methods (at the price of a constant that depends on the regularity of the pressure).
From the numerical results we can extract the following conclusions:
\begin{itemize}
    \item SV-LSVS works well and converges with an optimal order in any situation (Example 1-4); in the extreme
    Example 1 it delivers the
    exact solution for every stabilization parameter;
    \item SV-SUPG converges always sub-optimally. In situations,
    where the convective force is close to a gradient it can be less
    accurate than the plain SV method. However, for situations,
    where the convective term is divergence-free, SV-SUPG delivers
    more accurate results on coarse meshes than plain SV;
    \item SV-LSVS outperforms plain SV and SV-SUPG,
    in the most general Example 4, where the convective term
    has a divergence-free and an irrotational part;
    \item the SV-LSVS has a robust behavior with respect
    to the stabilization parameter. For all the Examples 1--4, the
    same parameter $\delta_0^{}=0.006$  was used. Instead, for SV-SUPG in Example 1
    it could be shown that the optimal parameter is $\delta_0^{}=0$,
    while it is about $\delta_0^{}\approx0.25$ for Examples 2-4.
\end{itemize}

\section*{Acknowledgements} The work of GRB has been funded by the Leverhulme Trust through the Research Fellowship No. RF-2019-510.

\bibliographystyle{abbrv}
\bibliography{divfree}

\end{document}